\newtheorem{lemma}{Lemma}[section]
\newtheorem{theorem}[lemma]{Theorem}
\newtheorem{prop}[lemma]{Proposition}
\newtheorem{cor}[lemma]{Corollary}
\newtheorem{claim*}{Claim}
\newtheorem{thm}[lemma]{Theorem}
\newtheorem*{thm*}{Theorem}
\theoremstyle{definition}
\newtheorem{defn}[lemma]{Definition}
\newtheorem{conj}[lemma]{Conjecture}
\newtheorem{remark}[lemma]{Remark}
\newcommand{\Kum}{\textrm{\normalfont Kum}}
\newcommand{\A}{{\bf A}}
\newcommand{\G}{{\bf G}}
\newcommand{\PP}{{\bf P}}
\newcommand{\F}{{\bf F}}
\newcommand{\Q}{{\bf Q}}
\newcommand{\Z}{{\bf Z}}
\newcommand{\NN}{{\bf  N}}
\newcommand{\Xbar}{{\overline{X}}}
\newcommand{\kbar}{{\overline{k}}}
\newcommand{\calO}{{\mathcal O}}
\newcommand{\calV}{{\mathcal V}}
\newcommand{\calX}{{\mathcal X}}
\newcommand{\frakF}{{\mathfrak F}}
\newcommand{\scrA}{{\mathscr A}}
\newcommand{\scrK}{{\mathscr K}}
\DeclareMathOperator{\inv}{inv}
\DeclareMathOperator{\im}{im}
\DeclareMathOperator{\Hom}{Hom}
\DeclareMathOperator{\Aut}{Aut}
\DeclareMathOperator{\Gal}{{Gal}}
\DeclareMathOperator{\Cor}{Cor}
\DeclareMathOperator{\Res}{Res}
\DeclareMathOperator{\Br}{{Br}}
\DeclareMathOperator{\Pic}{Pic}
\DeclareMathOperator{\Spec}{{Spec}}
\DeclareMathOperator{\et}{\textrm{\normalfont \'et}}
\DeclareMathOperator{\N}{N}
\DeclareMathOperator{\GL}{GL}
\DeclareMathOperator{\NS}{NS}
\DeclareMathOperator{\pr}{pr}
\numberwithin{equation}{section}
\numberwithin{table}{section}
\newcommand{\defi}[1]{\textsf{#1}} 
\newcommand{\CH}{\textrm{\normalfont CH}}
\newcommand\reallywidehat[1]{%
\savestack{\tmpbox}{\stretchto{%
  \scaleto{%
    \scalerel*[\widthof{\ensuremath{#1}}]{\kern-.6pt\bigwedge\kern-.6pt}%
    {\rule[-\textheight/2]{1ex}{\textheight}}
  }{\textheight}%
}{0.5ex}}%
\stackon[1pt]{#1}{\tmpbox}%
}
\title{Arithmetic of rational points and zero-cycles on Kummer varieties}
\author{Francesca Balestrieri}
\address{Francesca Balestrieri\\ Max-Planck-Institut f{$\ddot{\textrm{u}}$}r Mathematik\\
Vivatsgasse 7\\
 53115 Bonn\\
 Germany.}
\email{fbales@mpim-bonn.mpg.de}
\author{Rachel Newton}
\address{Rachel Newton\\
Department of Mathematics and Statistics\\ 
University of Reading\\
Whiteknights\\
PO Box 220\\
Reading RG6 6AX\\ 
UK.}
   \email{r.d.newton@reading.ac.uk}
\thanks{ \textit{MSC2010:} Primary 11G35; Secondary 14G05, 14G25. 
 \textit{Keywords:} Zero-cycles, Brauer-Manin obstruction, weak approximation, Hasse principle, Kummer varieties,  K3 surfaces.}
\begin{document}
\maketitle

\begin{abstract}
Let $k$ be a number field, let $X$ be a Kummer variety over $k$, and let $\delta$ be an odd integer. In the spirit of a result by Yongqi Liang, we relate the arithmetic of rational points over finite extensions of $k$ to that of zero-cycles over $k$ for $X$.  For example, we show that if the Brauer-Manin obstruction is the only obstruction to the existence of rational points on $X$ over all finite extensions of $k$, then the 2-primary Brauer-Manin obstruction is the only obstruction to the existence of a zero-cycle of degree $\delta$ on $X$ over $k$.
\end{abstract}

\maketitle

\section{Introduction}
Let $X$ be a smooth, proper, geometrically integral variety over a number field $k$ with algebraic closure $\kbar$, 
and let $\Br X:= \mathrm{H}^2_{\et}(X, \G_m)$ be the Brauer group of $X$.  In \cite{Manin}, Manin described a pairing \mbox{\(\langle \ , \  \rangle_{\textrm{BM}} : X(\A_k) \times \Br X \to \Q/\Z\)} such that the set of rational points $X(k)$ is contained in the subset $X(\A_k)^{\Br X}$ of adelic points that are orthogonal to $\Br X$ under the pairing. It can (and sometimes does) happen that $X(\A_k)^{\Br X} = \emptyset $ while $X(\A_k) \neq \emptyset$, yielding an obstruction to the Hasse principle on $X$. This obstruction is known as the Brauer-Manin obstruction (for more details, we refer the reader to e.g. \cite{Skorobogatov-Torsors}). Variations of the Brauer-Manin obstruction can be obtained by considering subsets $B \subset \Br X$ instead of the full Brauer group. For example, one can use the algebraic Brauer group $\Br_1 X:= \ker( \Br X \to \Br \Xbar)$, or the $d$-primary torsion subgroup $\Br X \{d\}$.
Although the Brauer-Manin obstruction may not always suffice to explain the failure of the Hasse principle (see e.g. \cite{Skorobogatov-1999}, \cite{Poonen-2010}), it is nonetheless known to be sufficient for some classes of varieties and is conjectured to be sufficient for geometrically rationally connected varieties (see \cite{CT03}) and K3 surfaces (see \cite{Skorobogatov-K3Conj}). For a survey of recent progress and open problems 
in the field of rational points and obstructions to their existence, we refer the reader to \cite{Wittenberg}.
In general, it is interesting to also consider 0-cycles on varieties, rather than just rational points. Recall that a 0-cycle on $X$ is a formal sum of closed points of $X$, $z=\sum n_i P_i$, with degree $\deg(z)=\sum n_i [k(P_i):k]$. A rational point of $X$ over $k$ is thus a 0-cycle of degree $1$. Conjecturally, the Brauer-Manin obstruction to the existence of 0-cycles of degree $1$ is the only one for smooth, proper, geometrically integral varieties over $k$ (see \cite{KS86}, \cite{CT95}). In this paper,  we focus on the relationship between the arithmetic of the rational points of a variety $X$ over extension fields and the arithmetic of 0-cycles on $X$. We consider the family $\scrK_{k,g}$ of smooth, projective, geometrically integral varieties over $k$ of dimension $g$ which become Kummer varieties upon base change to $\overline{k}$. Our main result (Theorem~\ref{cycKumNU}) implies the following:

\begin{theorem}\label{thm:main_intro}
 Let $X \in \scrK_{k,g}$ and let  $d\in\Z_{>0}$ and $\delta \in \Z$ with $\gcd(d,\delta)=1$. 
 Suppose that, for all finite extensions $l/k$, the $d$-primary Brauer-Manin obstruction is the only obstruction to the existence of rational points over $l$.
 Then the $d$-primary Brauer-Manin obstruction to the Hasse principle is the only obstruction to the existence of 0-cycles of degree $\delta$ on $X$. 
\end{theorem}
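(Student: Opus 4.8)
The plan is to follow the strategy of Liang, reducing the statement about $0$-cycles over $k$ to the assumed statement about rational points over the finite extensions $l/k$. The conclusion unwinds to the following concrete assertion: if $(z_v)_v$ is a family of local $0$-cycles, each of degree $\delta$, that is orthogonal to $\Br X\{d\}$ under the sum of the local Brauer--Manin pairings, then $X$ carries a global $0$-cycle of degree $\delta$. First I would reduce to a finite set of places: since the relevant quotient $\Br X\{d\}/\Br_0 X$ is finite (which holds for $X\in\scrK_{k,g}$ by the known structure of Brauer groups of Kummer varieties) and the local pairing with a fixed class vanishes at all places of good reduction, only finitely many places $S$ contribute; after enlarging $S$ I may assume the $z_v$ are effective and supported away from a fixed hyperplane section.

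The heart of the argument is the production of global closed points from the hypothesis on rational points. Fixing a projective embedding $X \hookrightarrow \PP^n_k$, I would use a Bertini/moving-lemma construction to produce a closed point $P$ of $X$ whose residue field $l := k(P)$ is a finite extension of $k$, and such that, for each $v \in S$, the base change $P \times_k k_v$ is close to the support of $z_v$. The key point is then to transport the orthogonality: by compatibility of the Brauer--Manin pairing with $\Res$ and $\Cor$, orthogonality of $(z_v)_v$ to $\Br X\{d\}$ forces the local points of $X_l$ coming from $P$ to be orthogonal to $\Res_{l/k}\Br X\{d\}$. Arranging that this suffices to meet the hypothesis for $X_l$, I conclude $X_l(l)\neq\emptyset$, i.e. $X$ acquires a genuine closed point of degree $[l:k]$ approximating the prescribed local data.

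Next I would control the degree. Once global closed points are available, any $\Z$-linear combination of them is automatically orthogonal to the full Brauer group $\Br X$ by global reciprocity, so the only remaining task is to realize total degree exactly $\delta$. This is where $\gcd(d,\delta)=1$ is used: by producing closed points of two suitably chosen degrees whose greatest common divisor divides $\delta$ and taking a Bézout combination, I can pin the degree to $\delta$ while preserving orthogonality. The same coprimality, via the splitting of $\Q/\Z$ into its $d$-primary and prime-to-$d$ parts, ensures that only the $d$-primary Brauer--Manin condition---matching the assumed hypothesis over the extensions $l$---is consumed in driving the construction, the prime-to-$d$ part being taken care of by reciprocity at the end.

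The main obstacle I anticipate is the Brauer bookkeeping in the production step. The hypothesis concerns all of $\Br X_l\{d\}$, whereas the construction only naturally controls the restriction $\Res_{l/k}\Br X\{d\}$, and $\Br X_l$ may be strictly larger than the image of $\Br X$; the ``new'' classes appearing over $l$ must be shown not to obstruct. I expect to handle this by choosing $P$ sufficiently generically---exploiting the geometry of the Kummer variety and the boundedness of its Brauer group coming from the associated abelian variety---so that the extra classes evaluate trivially on the chosen local points, and by verifying the conditions uniformly across the extensions $l$ that arise, including at the archimedean and bad places. Making this uniform control simultaneously compatible with the degree and the coprimality requirements is the crux of the proof.
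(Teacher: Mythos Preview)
Your outline follows Liang's template and you correctly single out the crux: the hypothesis gives control over $\Br X_l\{d\}$, but the construction only naturally produces orthogonality to $\Res_{l/k}\Br X\{d\}$, and the gap between these must be closed. Your proposed fix for this gap, however, is not the right one and would not work as stated.

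You suggest handling the ``new'' classes in $\Br X_l\{d\}$ by choosing the closed point $P$ sufficiently generically so that these extra classes evaluate trivially on the local points involved. There is no mechanism that makes this happen: a Brauer class that is nonconstant does not become constant on generic fibres, and genericity of $P$ in $X$ says nothing about how unknown elements of $\Br X_l$ pair with adelic points. What the paper does instead is prove that, for $X\in\scrK_{k,g}$, there \emph{are no} new classes: one shows that the restriction map
\[
\Res_{l/k}\colon (\Br X/\Br_0 X)\{d\}\longrightarrow(\Br X_l/\Br_0 X_l)\{d\}
\]
is an isomorphism whenever $[l:k]$ is coprime to $d$ and $l/k$ is linearly disjoint from a fixed finite extension $Fk'/k$ (here $k'$ trivialises $\Pic\overline{X}$ and $F$ trivialises the Galois action on $\Br\overline{X}[d]$). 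This is the paper's main technical input and uses the Kummer structure in an essential way: one controls the transcendental part via the identification $\Br\overline{X}\cong\Br\overline{A}$ and the fact that $\Br\overline{A}[d^n]\cong(\Z/d^n\Z)^m$, together with a $p$-group argument on $\ker(\GL_m(\Z/d^n\Z)\to\GL_m(\Z/d\Z))$, and the algebraic part via the usual Hochschild--Serre computation. Your sketch does not contain this idea.

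Two further points. First, the construction does not take place directly on $X$: one passes to $X\times\PP^1_k$ and uses the projection to $\PP^1_k$ so that Liang's irreducibility lemma produces a closed point $\theta\in\PP^1_k$ of the desired degree $\Delta$ and linearly disjoint from $Fk'/k$; the rational point is then found on $X_{k(\theta)}$. Second, the degree is not fixed at the end by a B\'ezout combination of several closed points; rather one arranges from the start that $\Delta\equiv\delta\pmod{mnd\delta_P}$ for a fixed closed point $P$ of degree $\delta_P$, and then corrects by a multiple of $P$. The coprimality $\gcd(d,\delta)=1$ is used to ensure $\gcd(\Delta,d)=1$, which is exactly what is needed to invoke the Brauer-group isomorphism above.
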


 For Kummer varieties, Creutz and Viray (see \cite{CreutzViray-DegBMO}) showed that if the Brauer-Manin obstruction is the only obstruction to the existence of rational points then the $2$-primary Brauer-Manin obstruction is the only obstruction to the existence of rational points. This gives the following corollary in the case where $X$ is a Kummer variety over $k$ (cf. Corollary~\ref{cor:Kummer}).

\begin{cor}\label{cor:Kummer2-primary}
Let $X$ be a Kummer variety over $k$ and let $\delta$ be an odd integer. Suppose that, for all finite extensions $l/k$, the Brauer-Manin obstruction is the only obstruction to the existence of rational points over $l$.
 Then the $2$-primary Brauer-Manin obstruction to the Hasse principle is the only obstruction to the existence of 0-cycles of degree $\delta$ on $X$. 
\end{cor}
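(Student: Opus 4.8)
The plan is to derive the Corollary directly from Theorem~\ref{thm:main_intro} by specializing to $d=2$, using the theorem of Creutz and Viray to verify the relevant hypothesis. First I would note that a Kummer variety $X$ over $k$ is, by definition, a member of the family $\scrK_{k,g}$ with $g=\dim X$: it already becomes a Kummer variety over $\kbar$, so it trivially satisfies the defining condition of $\scrK_{k,g}$. Since $\delta$ is odd we have $\gcd(2,\delta)=1$, so the coprimality input required by Theorem~\ref{thm:main_intro} with $d=2$ is in place.

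The one substantive point is to check the running hypothesis of Theorem~\ref{thm:main_intro} for $d=2$: namely that, \emph{for every} finite extension $l/k$, the $2$-primary Brauer--Manin obstruction is the only obstruction to the existence of rational points on $X$ over $l$. I would establish this over each finite extension $l/k$ separately. The base change $X_l := X\times_k l$ is again a Kummer variety over $l$, since it becomes a Kummer variety over $\overline{l}=\kbar$. By the hypothesis of the Corollary, the Brauer--Manin obstruction is the only obstruction to the existence of rational points on $X_l$ over $l$. Applying the theorem of Creutz and Viray to the Kummer variety $X_l$ over $l$ then upgrades this to the assertion that the $2$-primary Brauer--Manin obstruction is the only obstruction to rational points on $X_l$ over $l$. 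As $l/k$ was an arbitrary finite extension, the $d=2$ hypothesis of Theorem~\ref{thm:main_intro} holds.

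With all the hypotheses verified, Theorem~\ref{thm:main_intro} applied with $d=2$ gives precisely the desired conclusion: the $2$-primary Brauer--Manin obstruction to the Hasse principle is the only obstruction to the existence of a $0$-cycle of degree $\delta$ on $X$. I do not expect a genuine difficulty here, as the statement is a formal combination of two black-box inputs; the only point demanding care is the interchange of quantifiers --- one must apply the Creutz--Viray result afresh over each finite extension $l/k$. This is legitimate precisely because base change preserves the Kummer property and because the Corollary's hypothesis supplies the ``Brauer--Manin is the only obstruction'' input uniformly over all such $l$. It is also worth confirming that the precise form of the Creutz--Viray result invoked is a per-field implication (Brauer--Manin only obstruction over $l$ $\Rightarrow$ $2$-primary Brauer--Manin only obstruction over $l$), rather than one tied to the ground field alone, so that this per-$l$ application is valid.
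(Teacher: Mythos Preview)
Your proposal is correct and matches the paper's own argument: the paper's proof of this corollary (restated as Corollary~\ref{cor:Kummer}) is precisely to invoke \cite[Thm~1.7]{CreutzViray-DegBMO} over each finite extension $l/k$ (cf.\ Remark~\ref{rem2}) to pass from the full Brauer--Manin hypothesis to the $2$-primary one, and then to apply the $d=2$ case of Theorem~\ref{cycKumNU} (the precise form of Theorem~\ref{thm:main_intro}). Your care about applying Creutz--Viray afresh over each $l$ and about $X_l$ remaining a Kummer variety is exactly the point underlying Remark~\ref{rem2}.
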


In \cite{Skorobogatov-K3Conj}, Skorobogatov conjectured that the Brauer-Manin obstruction is the only obstruction to the existence of rational points on K3 surfaces over number fields. This leads to the following conditional result for Kummer surfaces (cf. Corollary \ref{corsimple}).

\begin{cor}\label{cor:conditional}
Let $\delta$ be an odd integer. Then
Skorobogatov's conjecture that the Brauer-Manin obstruction is the only obstruction to the existence of rational points on K3 surfaces over number fields implies that the 2-primary Brauer-Manin obstruction to the Hasse principle is the only one for 0-cycles of degree $\delta$ on Kummer surfaces over number fields. 
\end{cor}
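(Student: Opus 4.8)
The plan is to deduce the statement directly from Corollary~\ref{cor:Kummer2-primary}, whose hypothesis is precisely that the Brauer--Manin obstruction is the only obstruction to the existence of rational points over \emph{every} finite extension $l/k$. The key observation is that a Kummer surface is a K3 surface, and that being a K3 surface is a geometric property preserved under base change of the ground field; this is what lets Skorobogatov's conjecture feed into the hypothesis of the earlier corollary.

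First I would fix a Kummer surface $X$ over a number field $k$ and an odd integer $\delta$, and assume Skorobogatov's conjecture that the Brauer--Manin obstruction is the only obstruction to the existence of rational points on K3 surfaces over number fields. Let $l/k$ be an arbitrary finite extension. Then $l$ is again a number field, and the base change $X_l := X \times_{\Spec k} \Spec l$ is again a K3 surface over $l$, since the numerical conditions defining a K3 surface (trivial canonical bundle and vanishing of the first cohomology of the structure sheaf) are insensitive to field extension. Applying Skorobogatov's conjecture to the K3 surface $X_l$ over the number field $l$ yields that the Brauer--Manin obstruction is the only obstruction to the existence of rational points on $X$ over $l$. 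As $l/k$ was arbitrary, this holds for all finite extensions $l/k$, so the hypothesis of Corollary~\ref{cor:Kummer2-primary} is verified.

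Since $\delta$ is odd, I would then invoke Corollary~\ref{cor:Kummer2-primary} to conclude that the $2$-primary Brauer--Manin obstruction to the Hasse principle is the only obstruction to the existence of $0$-cycles of degree $\delta$ on $X$, as desired. I do not expect any genuine obstacle here: the whole content of the argument is that the hypothesis of Corollary~\ref{cor:Kummer2-primary} must be checked over all finite extensions $l/k$ at once, which is exactly why Skorobogatov's conjecture is needed in its full form (for K3 surfaces over every number field, not merely over $k$). Because the conjecture is stated at that level of generality, and because base change preserves the K3 property, this causes no difficulty and the corollary follows formally.
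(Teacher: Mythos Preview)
Your proposal is correct and takes essentially the same approach as the paper: the paper deduces the result from Theorem~\ref{cycKumNU} (with $d=2$) together with Remark~\ref{rem2} and Skorobogatov's conjecture, which is exactly the chain of implications packaged into Corollary~\ref{cor:Kummer2-primary} that you invoke. The only point you make explicit that the paper leaves implicit is that the K3 property is preserved under base change, which is indeed needed to apply the conjecture over every finite extension $l/k$.
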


\begin{remark}Note that we use the phrase `Kummer variety' in a slightly more general way than it is used in some texts. See Section~\ref{sec:Kummer} for the definition of a Kummer variety; a Kummer surface is a Kummer variety of dimension $2$.
\end{remark}

This paper was inspired by Liang's work in \cite[Thm 3.2.1]{Liang}, where he proved for geometrically rationally connected varieties $X$ that if the Brauer-Manin obstruction is the only obstruction to the existence of rational points on $X$ over every finite extension of $k$, then the Brauer-Manin obstruction is the only obstruction to the existence of a 0-cycle of degree 1 on $X$ over $k$ (see also \cite[Prop. 3.4.1]{Liang} for a result about 0-cycles of any degree). He also proved the analogue for weak approximation, in the sense defined in Definition~\ref{defn:BrWA}. Our adaptation of Liang's strategy to the case of (geometric) Kummer varieties also yields the analogous result for this notion of weak approximation.

\begin{theorem}\label{thm:main_intro_WA}
 Let $X \in \scrK_{k,g}$ and let $d\in\Z_{>0}$ and $\delta \in \Z$ with $\gcd(d,\delta)=1$. 
 Suppose that for all finite extensions $l/k$, the $d$-primary Brauer-Manin obstruction is the only obstruction to weak approximation for rational points over $l$.
 Then the $d$-primary Brauer-Manin obstruction to the Hasse principle is the only obstruction to weak approximation for 0-cycles of degree $\delta$ on $X$. 
\end{theorem}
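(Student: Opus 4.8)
The plan is to follow Liang's strategy from \cite{Liang}, run in parallel with the proof of Theorem~\ref{thm:main_intro} (equivalently, to deduce the statement as a consequence of Theorem~\ref{cycKumNU}), replacing assertions about \emph{existence} by assertions about \emph{approximation}. Unwinding Definition~\ref{defn:BrWA}, the conclusion amounts to the following: given a finite set $S$ of places of $k$, an integer $N>0$, and a family $(z_v)_v$ of local $0$-cycles of degree $\delta$ that is orthogonal to $\Br X\{d\}$ under the Brauer-Manin pairing, one must produce a global $0$-cycle $z$ on $X$ of degree $\delta$ whose class agrees with that of $z_v$ in $\CH_0(X_{k_v})/N$ for every $v\in S$. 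The whole difficulty is to manufacture such a $z$ out of rational points lying over finite extensions of $k$.

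First I would reduce the $0$-cycle problem to a rational-point problem over a varying finite extension $l/k$, using the projection formula for the Brauer-Manin pairing: for $Q\in X(\A_l)$ and $A\in\Br X$ one has $\langle\Cor_{l/k}(Q),A\rangle_k=\langle Q,\Res_{l/k}(A)\rangle_l$, and the pushforward $\Cor_{l/k}(Q)$ is represented by a $0$-cycle of degree $[l:k]$. Thus, starting from the local data $(z_v)$, I would decompose it into closed points, read off the residue fields that occur, and assemble from them a finite extension $l/k$ together with a family of local $l$-points for which the orthogonality of $(z_v)$ against $\Br X\{d\}$ becomes orthogonality against $\Res_{l/k}\Br X\{d\}\subseteq\Br X_l\{d\}$. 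The hypothesis, applied over $l$ in its weak-approximation form, then yields an honest $l$-rational point approximating the prescribed local conditions, and corestriction pushes it back down to a global $0$-cycle on $X$ approximating $(z_v)$ at $S$ to precision $N$.

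The coprimality $\gcd(d,\delta)=1$ enters in the degree bookkeeping. Since $\delta$ is invertible modulo every power of $d$, orthogonality against $\Br X\{d\}$ passes freely between the degree-$\delta$ target and the auxiliary cycles of degree $1$ (or degree $[l:k]$) produced by corestriction. To land on the exact degree $\delta$ I would then modify the constructed cycle by multiples of a globally defined $0$-cycle on $X$; by Manin's reciprocity such cycles lie in the kernel of the pairing, so they change the degree without affecting either the adelic Brauer class or the approximation at $S$ (one chooses their support away from $S$), and $\gcd(d,\delta)=1$ guarantees that $\delta$ is reached within the resulting coset of achievable degrees. Here the geometry of (geometric) Kummer varieties is used as in \cite{CreutzViray-DegBMO}: it guarantees the finiteness of $\Br X/\Br_0 X$ and the controlled behaviour of the $d$-primary classes under the restriction maps $\Res_{l/k}$ for the extensions $l$ that arise, which is what makes the passage ``up to $l$ and back down'' compatible with the $d$-primary condition.

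The hard part, as in Liang's original argument, will be to meet the three requirements simultaneously: approximating $(z_v)$ at the places of $S$ to precision $N$, landing on the exact degree $\delta$, and preserving orthogonality against \emph{all} of $\Br X\{d\}$ (a group one must first know to be manageable) --- all while the auxiliary field $l$ is itself being chosen to depend on the local data. Making the construction uniform enough that a single extension $l$ and a single application of the hypothesis over $l$ deliver all three at once is the crux; I expect this to require a careful choice of $l$ adapted both to the supports of the $z_v$ and to the ramification of the relevant Brauer classes, which is precisely the point at which the Kummer structure and the finiteness of $\Br X/\Br_0 X$ become indispensable.
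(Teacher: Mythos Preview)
Your high-level outline has the right shape, and you are correct that the statement is the weak-approximation half of Theorem~\ref{cycKumNU}. But two of the steps you describe as routine are in fact the crux of the argument, and as written your sketch would not go through.

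\textbf{Constructing the extension $l$.} You propose to ``decompose [the local cycle] into closed points, read off the residue fields that occur, and assemble from them a finite extension $l/k$''. Local residue fields $k_v(P)/k_v$ at different places $v$ do not assemble into a global extension of $k$; there is no functor that does this. The actual mechanism (Liang's, and the one used here) is to pass to $X\times\PP^1_k$, push the local cycles forward along $\pi:X\times\PP^1_k\to\PP^1_k$ to get effective separable $0$-cycles on $\PP^1_{k_v}$, and then invoke an interpolation/irreducibility lemma (\cite[Prop.~3.3.3]{Liang}) to find a \emph{single} closed point $\theta\in\PP^1_k$ of the prescribed degree $\Delta$ whose local splittings match the given data and such that $l=k(\theta)$ is linearly disjoint from a fixed auxiliary extension. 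The fibration over $\PP^1$ is not a convenience; it is what makes the global $l$ exist with the required degree and local behaviour.

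\textbf{Upgrading orthogonality.} The projection formula only gives orthogonality of the resulting adelic $l$-point to $\Res_{l/k}\Br X\{d\}$, not to $\Br X_l\{d\}$; your hypothesis over $l$ needs the latter. You attribute the passage between the two to \cite{CreutzViray-DegBMO}, but that paper does not provide it. This is precisely the content of Sections~\ref{sec:ControlAbVar}--\ref{sec:ControlKummer} of the present paper (culminating in Theorem~\ref{thmisoNU}): for $X\in\scrK_{k,g}$ and $l/k$ of degree coprime to $d$ and linearly disjoint from a specific finite extension $Fk'/k$ (where $\Gamma_F$ acts trivially on $\Br\Xbar[d]$ and $\Pic X_{k'}=\Pic\Xbar$), the map $\Res_{l/k}:(\Br X/\Br_0 X)\{d\}\to(\Br X_l/\Br_0 X_l)\{d\}$ is an isomorphism. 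This is the genuinely new input for Kummer varieties, replacing the triviality of the geometric Brauer group in Liang's rationally connected case. The interpolation step above must therefore be run so that $[l:k]=\Delta$ is coprime to $d$ (arranged via $\gcd(d,\delta)=1$ and $\Delta\equiv\delta\bmod d$) and $l$ is linearly disjoint from $Fk'$; only then can the hypothesis over $l$ be applied.

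Finally, your degree adjustment is not quite right: one does not ``choose support away from $S$'' for the correcting cycle. One fixes in advance a closed point $P=(x_0,u_0)$ of degree $\delta_P$, arranges $\Delta\equiv\delta\bmod n\delta_P$ as part of the construction, and then adds a multiple of $x_0$; the congruence modulo $n$ is what guarantees the image in $\CH_0(X_{k_v})/n$ is unchanged at every $v\in S$.
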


Our key observation, which allows us to employ Liang's method, is that, while it may not be possible to control the growth of the whole of $\Br X/\Br_0 X$ under an extension of the base field, in the case of a variety $X\in\scrK_{k,g}$ the $d$-primary part of $\Br X/\Br_0 X$ is unchanged so long as the base field extension has degree coprime to $d$ and is linearly disjoint from a certain field extension depending on $X$. 
Controlling the growth of the algebraic part $\Br_1 X/ \Br_0 X$ of the Brauer group is easily achieved. We remark that, since the geometric Brauer group of geometrically rationally connected varieties is trivial, the algebraic part of the Brauer group is all that has to be considered in Liang's paper. In the case of Kummer varieties, however, we must also consider the transcendental part $\Br X/ \Br_1 X$ of the Brauer group. This is much harder to deal with than the algebraic part.  In order to control the growth of the transcendental part under extensions of the base field, we use the close relationship between the Brauer group of a Kummer variety and the Brauer group of the underlying abelian variety. The exploitation of the rich structure of abelian varieties is a common theme underlying much of the recent rapid progress in the study of rational points on the related K3 surfaces and Kummer varieties (see e.g. \cite{IeronymouSkoroZarhin}, \cite{SZ-2012},  \cite{IeronymouSkoro},  \cite{Newton15},   \cite{HarpazSkoro}, \cite{VAV16},  \cite{Harpaz}, \cite{CreutzViray-DegBMO}, \cite{SZ-KumVar}). Our work in Section \ref{sec:ControlAbVar} on the transcendental part of the Brauer group of geometrically abelian varieties and geometrically Kummer varieties may be of independent interest.

\begin{remark}After obtaining the results in the present paper, we were made aware of a recent preprint of Ieronymou (\cite{Ieronymou}) in which he uses Liang's approach together with work of Orr and Skorobogatov (\cite{OrrSkoro}) to prove analogues of Liang's results in \cite{Liang} for K3 surfaces. Since Kummer surfaces are both K3 surfaces and Kummer varieties of dimension $2$, there is an overlap between our work and that of Ieronymou (note that Ieronymou's results do not require that the degree of the 0-cycle is odd). We remark, however, that our results are not implied by those of Ieronymou since we work with Kummer varieties of arbitrary dimension.
\end{remark}

Our final result concerns lifting sufficiency of the Brauer-Manin obstruction to the existence of rational points from the base field to uncountably many finite extensions. 

\begin{theorem}\label{thm:liftsuff}
Let $X$ be a Kummer variety  over $k$. Suppose that  $(\Br X/\Br_0 X)\{2\} \cong\Z/2\Z$ and that $X(\A_k) \neq \emptyset$. Then there exist uncountably many finite extensions $l/k$  such that 
\[ X(\A_k)^{\Br X} = \emptyset \Longrightarrow  X(\A_l)^{\Br X_l} = \emptyset.\]
In particular, if the Brauer-Manin obstruction to the Hasse principle  is the only one for $k$-rational points, then it is also the only one  for $l$-rational points, for all field extensions $l/k$ as above. 
\end{theorem}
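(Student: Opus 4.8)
The plan is to reduce the whole Brauer--Manin obstruction on $X$ to a single class of order two, and then to show that this class continues to obstruct after base change to a carefully chosen odd-degree extension. First I would invoke the result of Creutz and Viray for Kummer varieties that the Brauer--Manin set is already cut out by the $2$-primary subgroup, so that $X(\A_k)^{\Br X} = X(\A_k)^{\Br X\{2\}}$; since $X(\A_k)\neq\emptyset$ while the left-hand side is empty, the $2$-primary classes genuinely obstruct. By hypothesis $(\Br X/\Br_0 X)\{2\}\cong\Z/2\Z$, so I may pick $A\in\Br X$ whose image generates this group; then $2A\in\Br_0 X$, and every element of $\Br X\{2\}$ is congruent to $0$ or to $A$ modulo $\Br_0 X$. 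Because constant classes pair trivially with adelic points (reciprocity), the obstruction coming from $\Br X\{2\}$ equals the obstruction from $A$ alone, giving $X(\A_k)^{A}=\emptyset$.

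Next I would extract the local shape of this obstruction. Since $2A\in\Br_0 X$, for each place $v$ the evaluation $\ev_{A,v}\colon X(k_v)\to\Q/\Z$, $x\mapsto\inv_v A(x)$, takes values in a single coset of $\tfrac12\Z/\Z$, and by reciprocity the global sum $\sum_v\ev_{A,v}(x_v)$ lies in $\{0,\tfrac12\}$ for every adelic point. If some $\ev_{A,v_0}$ were non-constant, then flipping the local component at $v_0$ would change the sum by $\tfrac12$ and hence realise the value $0$, contradicting $X(\A_k)^{A}=\emptyset$. Therefore each $\ev_{A,v}$ is constant, say equal to $c_v$, with $\sum_v c_v=\tfrac12$; moreover $c_v=0$ at every place of good reduction where $A$ is unramified, so $c_v=0$ outside a finite set $T$ containing the archimedean places, the places of bad reduction, and the ramification of $A$.

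I would then choose $l/k$ to be a finite extension of odd degree that is totally split at every place of $T$ (and totally real at the real places of $T$); such extensions exist in abundance, and the odd degree means there is no Grunwald--Wang obstruction to prescribing this local behaviour. Set $A_l=\Res_{l/k}A\in\Br X_l$. For $w\mid v$ with $v\in T$ one has $l_w=k_v$, so $\ev_{A_l,w}=\ev_{A,v}\equiv c_v$, and there are exactly $[l:k]$ such $w$; for $w\mid v$ with $v\notin T$ the variety $X_l$ has good reduction and $A_l$ is unramified at $w$, so $\ev_{A_l,w}\equiv 0$. Hence every local evaluation of $A_l$ is constant and, since $[l:k]$ is odd, the total is
\[
\sum_w c_w \;=\; [l:k]\sum_{v\in T}c_v \;=\; [l:k]\cdot\tfrac12 \;=\; \tfrac12 \;\neq\; 0 .
\]
Thus $X(\A_l)^{A_l}=\emptyset$, and a fortiori $X(\A_l)^{\Br X_l}\subseteq X(\A_l)^{A_l}=\emptyset$. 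Producing the asserted abundance of such $l$ is then the routine endpoint, obtained by varying the local conditions defining $l$ away from $T$.

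Finally, the ``in particular'' clause is a formal consequence. If the Brauer--Manin obstruction is the only one for $k$-points and $X(\A_k)^{\Br X}\neq\emptyset$, then $X(k)\neq\emptyset$, hence $X(l)\neq\emptyset$ and there is nothing to prove over $l$; if instead $X(\A_k)^{\Br X}=\emptyset$, then $X(\A_l)^{\Br X_l}=\emptyset$ for the fields $l$ above, so the obstruction is (vacuously) the only one over $l$. I expect the main obstacle to be the first two steps: the reduction to a single order-two class with constant local evaluations is precisely what lets an odd-degree totally split extension multiply each local invariant by an odd number and thereby preserve the value $\tfrac12$. The persistence itself is then only a parity count, requiring neither control of the transcendental Brauer group over $l$ nor the Creutz--Viray reduction over $l$.
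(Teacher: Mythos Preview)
Your argument is correct, but it takes a genuinely different route from the paper. The paper argues by contrapositive: assuming $X(\A_l)^{\Br X_l}\neq\emptyset$, it takes an adelic point $(x_w)$ orthogonal to $\alpha_l$ and, using that $l$ is completely split at the bad set $S$, \emph{descends} it to an adelic point $(x'_v)\in X(\A_k)$ orthogonal to $\alpha$. This descent is done by a case analysis: for each $v\in S$ one looks at $A_v:=\sum_{w\mid v}\inv_v\alpha(u_v^{(w)})$, picks a component with invariant $0$ when $A_v=0$ (such a component exists by parity), and pairs up the places with $A_v=\tfrac12$ to cancel their contributions. Creutz--Viray is then applied over $k$ to conclude.

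Your approach replaces this descent by the stronger structural observation that the hypothesis $X(\A_k)^{A}=\emptyset$ forces every local evaluation $\ev_{A,v}$ to be \emph{constant}; once this is known, the obstruction over any odd-degree $l$ totally split at the bad set is computed directly as $[l:k]\cdot\tfrac12=\tfrac12$. This is cleaner and shorter, and the constancy lemma is a nice fact in its own right. The paper's argument never establishes (or needs) constancy; it instead exploits the freedom to \emph{choose} local components with prescribed invariants. Both methods are specific to the rank-one situation $(\Br X/\Br_0 X)\{2\}\cong\Z/2\Z$, and both invoke Creutz--Viray only over $k$. For the ``uncountably many'' clause the paper cites Mazur--Rubin; your sketch here is adequate but could be made more precise.
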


\subsection{Structure of the paper} In Section~\ref{sec:Kummer}, we recall the definition of a Kummer variety. In Section~\ref{sec:ControlAbVar}, we show that the $d$-primary part of the transcendental Brauer group of a geometrically abelian or Kummer variety remains unchanged when the base field is extended, given certain conditions on the extension. In Section~\ref{sec:ControlKummer}, we use the results of Section~\ref{sec:ControlAbVar} to show that for $X\in\scrK_{k,g}$, the $d$-primary part of the quotient $\Br X/\Br_0 X$ remains unchanged when the base field is extended, given certain conditions on the extension. In Section~\ref{sec:0-cyc}, we prove our main results which are Theorem~\ref{cycKumNU} and its uniform analogue Theorem~\ref{cycKum}, along with Corollaries~\ref{cor:Kummer2-primary} and \ref{cor:conditional}. Finally, in Section~\ref{sec:transfer}, we prove Theorem~\ref{thm:liftsuff}.

\subsection{Notation and terminology}  Throughout this paper, we use the following notation.  In general, given a field $k$ of characteristic 0, we will implicitly assume that we have fixed an algebraic closure $\kbar$ of $k$ once and for all. If $k$ is a number field, we denote by $\Omega_k$ the set of all places of $k$. For a variety $X$ over a field $k$ of characteristic 0 and for $l/k$ a field extension, we let $X_l:= X \times_{\Spec k} \Spec l$ denote the base-change of $X$ to $\Spec l$; we write $\Xbar$ for $X_\kbar$. For a field $k$ of characteristic 0, we denote by $\Gamma_k$ the absolute Galois group $\Gal(\kbar/k)$. Let $d \in \Z_{>0}$. If $X$ is a variety over a field $k$ and $\Br X$ is its Brauer group, we denote by $\Br X [d]$ the $d$-torsion subgroup of the Brauer group and we denote by $\Br X \{d\}:=\varinjlim \Br X [d^n]$ the $d$-primary part  of the Brauer group. For a variety $X$ over a field $k$ of characteristic 0 with structure morphism $s: X \to \Spec k$, the algebraic Brauer group is defined as $\Br_1 X := \ker(\Br X \to \Br \Xbar)$, where the map is the natural one induced by the inclusion $k \subset \kbar$, while the constant Brauer group is defined as $\Br_0 X := \im (\Br k \to \Br X)$, where the map is the natural one induced by $s$. We will refer to the quotient $\Br_1 X/\Br_0 X$ as the \emph{algebraic} part of the Brauer group, and to the quotient $\Br X/ \Br_1 X$ as the \emph{transcendental} part of the Brauer group.\\

\noindent\emph{Acknowledgements.} This collaboration began at the workshop \emph{Rational Points 2017}. The authors are grateful to Michael Stoll for organising such a stimulating workshop.
FB thanks the Max-Planck-Institut f{$\ddot{\textrm{u}}$}r Mathematik for the financial support and for providing excellent working conditions.  The authors are grateful to Daniel Loughran for pointing out the recent preprint \cite{Ieronymou} and thank Evis Ieronymou for his interest in this work.

\section{Kummer varieties}\label{sec:Kummer}

Let $k$ be a field of characteristic 0. We define here our main objects of study.

\begin{defn}[({\cite[\S5.2]{CreutzViray-DegBMO}})] Let $A$ be an abelian variety over $k$. Any $k$-torsor $T$ under $A[2$]
gives rise to a $2$-covering $\rho: V \to A$, where $V$ is the quotient of $A\times_k T$ by the diagonal
action of $A[2]$ and $\rho$ comes from the projection onto the first factor. Then $T =\rho^{-1}(O_A)$ and $V$ has
the structure of a $k$-torsor under $A$. The class of $T$ maps to the class of $V$ under the
map $\mathrm{H}^1_{\et}(k, A[2]) \to \mathrm{H}^1_{\et}(k, A)$ induced by the inclusion of group schemes $A[2] ֒\to A$ and, in
particular, the period of $V$ divides $2$.
Let $\sigma : \tilde{V} \to V$ be the blow-up of $V$ at $T \subset V$. The involution $[-1]: A \to A$ fixes $A[2]$
and induces involutions $\iota$ on $V$ and $\tilde{\iota}$ on $\tilde{V}$ whose fixed point sets are $T$ and the exceptional
divisor, respectively. The quotient $\tilde{V}/\tilde{\iota}$ is geometrically isomorphic to the Kummer variety
$\Kum A$, so in particular it is smooth. We call $\Kum V  :=\tilde{V}/\tilde{\iota}$ the \defi{Kummer variety associated
to $V$ (or $T$)}. A Kummer variety of dimension $2$ is called a Kummer surface.
\end{defn}

 \begin{defn}  Denote by $\scrK_{k,g}$ the family of smooth, projective, geometrically integral varieties $X$ over $k$ of dimension $g$ such that $\overline{X}$ is a Kummer variety over $\overline{k}$.
 \end{defn}

\begin{remark} Let $\scrK^{K3}_{k, 2}$ 
be the family of smooth, projective K3 surfaces $X$ over $k$ such that $ \NS \Xbar$  contains a primitive sublattice isomorphic to the Kummer lattice. Then, by \cite[Theorem~3]{Nikulin}, $\scrK^{K3}_{k, 2}\subset \scrK_{k,2}$. Moreover,
by \cite[Prop. 2.1]{VAV16}, there exists some absolute $M \in \Z_{\geq 1}$ (not depending on $k$) such that for each $X\in \scrK^{K3}_{k, 2}$, there is an extension $k_0/k$ of degree at most $M$ for which $X_{k_0}$ is a Kummer surface over $k_0$. In fact, the proof of  \cite[Prop. 2.1]{VAV16} shows that one can take $M= 2 \cdot \#\GL_{20}(\mathbb{F}_3)$.
\end{remark}

We will use properties of the underlying abelian varieties to deduce results for varieties in  $\scrK_{k,g}$. Therefore, it is convenient to define the following family.

 \begin{defn}
 Denote by $\scrA_{k,g}$ the family of smooth, projective, geometrically integral varieties $X$ over $k$ of dimension $g$ such that $\overline{X}$ is an abelian variety over $\overline{k}$.
 \end{defn}

We write $X\in \scrA_{k,g}\cup \scrK_{k,g}$ to indicate that $X/k$ is a smooth, projective, geometrically integral variety that is either geometrically abelian or geometrically Kummer.

\section{Controlling the transcendental Brauer group of geometrically abelian varieties and geometrically Kummer varieties over field extensions}\label{sec:ControlAbVar}

Let $k$ be a field of characteristic 0. In this section, we show that the $d$-primary part of the transcendental Brauer group of a geometrically abelian or Kummer variety does not grow when the base field is extended, so long as the field extension satisfies certain conditions. 

\begin{prop}\label{prop:nogrowthabvar}
Let $d,n\in \Z_{>0}$. Let $k$ be a field of characteristic 0.
Let $X$ be a smooth, proper, geometrically integral variety over $k$. Let $l/k$ be a finite extension with degree coprime to $d$ such that $\Br\overline{X}[d^n]^{\Gamma_l}=\Br\overline{X}[d^n]^{\Gamma_k}$. 
Then
 the restriction map gives a canonical isomorphism of abelian groups
\[ \Res_{l/k}:(\Br X)[d^n]/(\Br_1 X)[d^n]\xrightarrow{\sim} (\Br X_l)[d^n]/(\Br_1 X_l)[d^n].\]
\end{prop}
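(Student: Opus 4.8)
The plan is to analyze the map of short exact sequences induced by restriction, using the Hochschild--Serre spectral sequence to identify the transcendental quotients with Galois-invariants of the geometric Brauer group. For a smooth, proper, geometrically integral variety $X$ over a field $k$ of characteristic $0$, the low-degree exact sequence coming from the Hochschild--Serre spectral sequence $\HH^p(\Gamma_k, \HH^q_{\et}(\Xbar, \G_m)) \Rightarrow \HH^{p+q}_{\et}(X, \G_m)$ gives an exact sequence whose effect is to realize $\Br X/\Br_1 X$ as a subgroup of $(\Br\Xbar)^{\Gamma_k}$. The key structural input is that the transcendental part $\Br X/\Br_1 X$ injects into $(\Br\Xbar)^{\Gamma_k}$, and the same holds over $l$ with $(\Br\Xbar)^{\Gamma_l}$ in place of $(\Br\Xbar)^{\Gamma_k}$. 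Since $\Br_1 X = \ker(\Br X \to \Br\Xbar)$ by definition, intersecting with $d^n$-torsion everywhere shows that $(\Br X)[d^n]/(\Br_1 X)[d^n]$ embeds into $(\Br\Xbar)[d^n]^{\Gamma_k}$, and similarly over $l$.

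First I would set up the commutative square relating the two injections: the left vertical map is $\Res_{l/k}$ on $(\Br X)[d^n]/(\Br_1 X)[d^n]$, the right vertical map is the inclusion $(\Br\Xbar)[d^n]^{\Gamma_k} \hookrightarrow (\Br\Xbar)[d^n]^{\Gamma_l}$ (restriction of invariants), and the horizontal maps are the spectral-sequence injections. The hypothesis $\Br\Xbar[d^n]^{\Gamma_l} = \Br\Xbar[d^n]^{\Gamma_k}$ makes the right vertical map an equality, so the diagram immediately shows $\Res_{l/k}$ is injective and has image contained in a group that maps isomorphically onto the target's image.

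The main work, and the step I expect to be the principal obstacle, is surjectivity: showing that every class in $(\Br X_l)[d^n]/(\Br_1 X_l)[d^n]$ comes from $X$. For this I would bring in a corestriction (transfer) argument. Because $[l:k]$ is coprime to $d$, the composite $\Cor_{l/k}\circ\Res_{l/k}$ acts as multiplication by $[l:k]$ on the $d^n$-torsion, which is an automorphism of any $d$-primary group. Concretely, given a class in $(\Br X_l)[d^n]/(\Br_1 X_l)[d^n]$, its image in $(\Br\Xbar)[d^n]^{\Gamma_l} = (\Br\Xbar)[d^n]^{\Gamma_k}$ already lies in the invariants under the bigger group $\Gamma_k$; I then use the corestriction to produce a class over $k$ whose restriction recovers the original up to the invertible scalar $[l:k]$, modulo the algebraic part. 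One must check that corestriction respects the algebraic subgroups, i.e.\ sends $\Br_1 X_l$ into $\Br_1 X$, which follows from functoriality of the Hochschild--Serre filtration under $\Cor$, and that the identification of transcendental quotients with geometric invariants is compatible with both $\Res$ and $\Cor$; this compatibility is the delicate bookkeeping.

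Finally I would assemble these pieces: injectivity from the commutative square, surjectivity from the corestriction together with the equality of invariants, yielding that $\Res_{l/k}$ is the claimed isomorphism, with canonicity inherited from the functoriality of restriction on Brauer groups. The coprimality of $[l:k]$ to $d$ is essential precisely at the surjectivity step to invert the transfer, while the equality of Galois-invariant $d^n$-torsion is what guarantees no new transcendental classes appear over $l$.
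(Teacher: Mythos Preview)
Your argument is correct and essentially the same as the paper's: both rest on the restriction--corestriction formalism together with the formula $\Res_{\overline{k}/k}\circ\Cor_{l/k}B=\sum_{\sigma\in\Gamma_k/\Gamma_l}\sigma(\Res_{\overline{k}/l}B)$ and the hypothesis on invariants. The only difference is organizational: the paper first shows $\Cor_{l/k}$ is injective on the transcendental quotients (this is exactly your surjectivity computation $\Res_{l/k}\circ\Cor_{l/k}\equiv[l:k]$, read contrapositively) and then concludes via $\Cor_{l/k}\circ\Res_{l/k}=[l:k]$, whereas you prove injectivity and surjectivity of $\Res_{l/k}$ directly; note that your injectivity step is automatic from the definition of $\Br_1$ and needs neither the Hochschild--Serre machinery nor the hypothesis.
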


\begin{proof}
\noindent\emph{Step 1:} We will show that the corestriction map induces an injection 
\[\Cor_{l/k} : (\Br X_l)[d^n]/(\Br_1 X_l)[d^n] \hookrightarrow (\Br X)[d^n]/(\Br_1 X)[d^n].\]
Let $B\in (\Br X_l)[d^n]$ and suppose that $\Cor_{l/k}B\in \Br_1 X$. By \cite[Lemme 1.4]{CTSk-2013}, 
\begin{equation}\label{eq:CorB}
\Res_{\overline{k}/k}\circ\Cor_{l/k}B=\sum_{\sigma\in\Gamma_k/\Gamma_l}\sigma(\Res_{\overline{k}/l}B),
\end{equation}
where the sum is over a set of coset representatives of $\Gamma_l$ in $\Gamma_k$. Since $\Cor_{l/k}B\in \Br_1 X$, the left-hand side equals zero. Moreover, $\Res_{\overline{k}/l}B\in\Br\overline{X}[d^n]^{\Gamma_l}$. By our hypothesis, this implies that $\Res_{\overline{k}/l}B\in\Br\overline{X}[d^n]^{\Gamma_k}$. Therefore, \eqref{eq:CorB} becomes
\begin{equation}\label{eq:BinBr1}
0=[l:k]\Res_{\overline{k}/l}B.
\end{equation}
Now recall that $[l:k]$ is coprime to $d$. Therefore, \eqref{eq:BinBr1} shows that $\Res_{\overline{k}/l}B=0$, since $B\in(\Br X_l)[d^n]$. Hence, $B\in(\Br_1 X_l)[d^n]$, as required.\\

\noindent\emph{Step 2:} Since $\Cor_{l/k}\circ\Res_{l/k}=[l:k]$ and $[l:k]$ is coprime to $d$, the corestriction map \[\Cor_{l/k}:(\Br X_l)[d^n]/(\Br_1 X_l)[d^n]\to (\Br X)[d^n]/(\Br_1 X)[d^n]\]
is surjective. Step 1 shows that it is an isomorphism and consequently the restriction map
\[\Res_{l/k}:(\Br X)[d^n]/(\Br_1 X)[d^n]  \to  (\Br X_l)[d^n]/(\Br_1 X_l)[d^n]\]
  is also an isomorphism.
\end{proof}

\begin{lemma}\label{lem:ppower}
Let $d \in \Z_{>0}$. Let $k$ be a field of characteristic 0.
Let $X\in\scrA_{k,g}\cup \scrK_{k,g}$. 
Let $l/k$ be a finite extension of degree coprime to $d$ such that the image of $\Gamma_l$ in $\Aut(\Br\overline{X}[d])$ is the same as that of $\Gamma_k$. 
Then for all $n\in\NN$, the image of $\Gamma_l$ in $\Aut(\Br\overline{X}[d^n])$ is the same as that of $\Gamma_k$.
\end{lemma}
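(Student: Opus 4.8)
The plan is to reduce the statement to a clean piece of finite group theory about the restriction map on automorphism groups, the whole point being that $[l:k]$ is coprime to $d$. First I would fix an embedding $l\hookrightarrow\overline k$, so that $\Gamma_l=\Gal(\overline k/l)$ is a subgroup of $\Gamma_k$ of index $[l:k]$, which is coprime to $d$ by hypothesis. Write $M:=\Br\overline X\{d\}$ for the $d$-primary part of the geometric Brauer group, so that $M[d^n]=\Br\overline X[d^n]$. Since $\overline X$ is smooth and proper, $\Br\overline X$ is of cofinite type, so each $M[d^n]$ is a finite abelian group whose order is a product of primes dividing $d$, and $\Aut(M[d^n])$ is finite. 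For each $n$ let $\rho_n\colon\Gamma_k\to\Aut(M[d^n])$ be the representation and set $G_n:=\rho_n(\Gamma_k)$ and $H_n:=\rho_n(\Gamma_l)$, so $H_n\le G_n$. The hypothesis is $G_1=H_1$ and the goal is $G_n=H_n$ for every $n$.

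The central object is the restriction-to-$d$-torsion map $r\colon\Aut(M[d^n])\to\Aut(M[d])$, obtained by restricting an automorphism to the characteristic subgroup $M[d]\subseteq M[d^n]$. Since $M[d]$ is a $\Gamma_k$-submodule of $M[d^n]$, the action of $\Gamma_k$ on $M[d]$ is the restriction of its action on $M[d^n]$; hence $\rho_1=r\circ\rho_n$, giving $r(G_n)=G_1$ and $r(H_n)=H_1$. The key claim is that $K:=\ker r$ is a $d$-group. I would prove this by showing $K$ has no nontrivial element of order prime to $d$: if $\phi\in K$ has order $m$ with $\gcd(m,d)=1$, then on each $p$-primary component $M_p$ of $M[d^n]$ (for $p\mid d$) the restriction $\phi|_{M_p}$ has order prime to $p$ and fixes the $p$-torsion $M_p[p]$. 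Averaging over $\langle\phi\rangle$, whose order is invertible on the $p$-group $M_p$, splits $M_p=M_p^{\phi}\oplus C$ with $C^{\phi}=0$; since $M_p[p]\subseteq M_p^{\phi}$ we get $C[p]=0$, forcing $C=0$ and $\phi|_{M_p}=\id$. Hence $\phi=\id$, and by Cauchy's theorem $|K|$ is divisible only by primes dividing $d$.

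With this in hand the argument is purely group-theoretic. Because $G_1=H_1$, every $g\in G_n$ agrees modulo $K$ with some $h\in H_n$, so $h^{-1}g\in N:=K\cap G_n$ and therefore $G_n=H_nN$. Here $N$ is normal in $G_n$ (as $K=\ker r$ is normal in $\Aut(M[d^n])$) and is a $d$-group. The second isomorphism theorem gives $[G_n:H_n]=[H_nN:H_n]=[N:N\cap H_n]$, which divides $|N|$ and is thus a product of primes dividing $d$. On the other hand, $\Gamma_k$ acts transitively on the coset space $G_n/H_n$ through $\rho_n$, with point stabiliser containing $\Gamma_l$, so $[G_n:H_n]$ divides $[\Gamma_k:\Gamma_l]=[l:k]$, which is coprime to $d$. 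An index that is simultaneously a $d$-number and coprime to $d$ equals $1$, whence $G_n=H_n$ for all $n$, as required.

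I expect the main obstacle to be precisely the claim that $\ker r$ is a $d$-group: because the restriction is to the full $d$-torsion $M[d]$ rather than to the quotient $M[d^n]/dM[d^n]$, one cannot simply invoke that a principal congruence kernel is pro-$d$, and the short Maschke-type averaging argument over the prime-to-$d$ cyclic group $\langle\phi\rangle$ is what makes it go through. Everything else—the divisibility of the two indices and the factorisation $\rho_1=r\circ\rho_n$—is formal. I would also remark that the only property of $X$ actually used is that $\Br\overline X$ is of cofinite type, so the geometrically abelian or Kummer hypothesis is not essential to this particular lemma beyond fixing the ambient setting.
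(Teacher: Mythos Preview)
Your proof is correct and takes a genuinely different route from the paper's. The paper first reduces to $d=p$ prime and then invokes the explicit structure of $\Br\overline{X}[p^n]$ for geometrically abelian and Kummer varieties: using the Kummer sequence and the cohomology of abelian varieties, it shows $\Br\overline{X}[p^n]\cong(\Z/p^n\Z)^m$ as an abstract group, so that $\Aut(\Br\overline{X}[p^n])\cong\GL_m(\Z/p^n\Z)$ and the restriction map to $\Aut(\Br\overline{X}[p])$ is literally reduction modulo $p$, with kernel the principal congruence subgroup, visibly a $p$-group. You bypass this structural computation entirely: your Maschke-type averaging argument shows directly that for \emph{any} finite abelian $d$-group $M$, the kernel of $\Aut(M)\to\Aut(M[d])$ is a $d$-group, with no freeness assumption on $M$. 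Consequently, as you observe, your argument needs only that $\Br\overline{X}[d^n]$ is finite (i.e.\ $\Br\overline{X}$ is of cofinite type), which holds for every smooth proper variety, so the hypothesis $X\in\scrA_{k,g}\cup\scrK_{k,g}$ is not actually used in your proof of this lemma. The paper's approach buys a concrete identification with matrix groups that is reused later (e.g.\ in Lemma~\ref{lem:lgood}); your approach buys a cleaner and strictly more general statement.
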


\begin{proof}
Factorise $d$ into primes, writing $d=p_1^{e_1}\dots p_r^{e_r}$. Then \[\Br\overline{X}[d^n]=\Br\overline{X}[p_1^{e_1n}]\times \dots \times \Br\overline{X}[p_r^{e_rn}]\] and \[\Aut(\Br\overline{X}[d^n])=\Aut(\Br\overline{X}[p_1^{e_1n}])\times \dots \times \Aut(\Br\overline{X}[p_r^{e_rn}]).\]
Thus, it suffices to prove the lemma when $d$ is a prime $p$. 

\noindent\emph{Step 1:} First we show that there exists $m\in \N$ such that for all $n\in \N$, $\Br\overline{X}[p^n]\cong (\Z/p^n\Z)^{m}$ as abelian groups. We begin by proving this in the case where $\overline{X}=\overline{A}$ is an abelian variety.

The Kummer exact sequence for $\overline{A}$ gives the following exact sequence of Galois modules:
\[0\to \NS\overline{A}/p^n\to \mathrm{H}^2_{\textrm{\'{e}t}}(\overline{A},\mu_{p^n})\to \Br\overline{A}[p^n]\to 0.\]

As an abelian group with no Galois action, \[\mathrm{H}^2_{\textrm{\'{e}t}}(\overline{A},\mu_{p^n})=\mathrm{H}^2_{\textrm{\'{e}t}}(\overline{A},\Z/p^n\Z)=\wedge^2\mathrm{H}^1_{\textrm{\'{e}t}}(\overline{A},\Z/p^n\Z).\]
It is well known that $\mathrm{H}^1_{\textrm{\'{e}t}}(\overline{A},\Z/p^n\Z)$ is isomorphic to $\Hom(A_{p^n},\Z/p^n\Z)$, see \cite[Lemma 2.1]{SZ-2012}, for example. As an abelian group, $A_{p^n}=(\Z/p^n\Z)^{2g}$ and hence \[\wedge^2\Hom(A_{p^n},\Z/p^n\Z)=\wedge^2(\Z/p^n\Z)^{2g}=(\Z/p^n\Z)^{g(2g-1)}.\]
Furthermore, $\NS\overline{A}$ is finitely generated and torsion-free of rank $\rho$ with $1\leq \rho\leq g^2$. Therefore, as an abelian group, $\Br\overline{A}[p^n]\cong (\Z/p^n\Z)^{m}$, where $m=g(2g-1)-\rho$.

Now if $X\in \scrK_{k,g}$ then $\overline{X}=\Kum \overline{A}$ for some abelian variety $A$. Hence, by \cite[Proposition~2.7]{SZ-KumVar}, $\Br\overline{X}=\Br\overline{A}$. This completes Step 1.
  
\noindent\emph{Step 2:}  Since $\Br\overline{X}[p^n]\cong (\Z/p^n\Z)^{m}$ as abelian groups, we have $ \Aut(\Br\overline{X}[p^{n}])\cong\GL_m(\Z/p^n\Z)$.
The inclusion $\Br\overline{X}[p]\subset\Br\overline{X}[p^{n}]$ induces a surjective map
\[\alpha: \Aut(\Br\overline{X}[p^{n}])\to\Aut(\Br\overline{X}[p]),\] which can be identified with the map $\GL_m(\Z/p^n\Z)\to \GL_m(\Z/p\Z)$ given by reduction modulo $p$.  
Therefore, 
$\ker\alpha$ is a $p$-group.

Let $\phi:\Gamma_k\to \Aut(\Br\overline{X}[p^{n}])$ be the map giving the Galois action. The First Isomorphism Theorem gives
\[\#\phi(\Gamma_l)=\#(\phi(\Gamma_l)\cap\ker\alpha) \cdot \#\alpha(\phi(\Gamma_l))\] and \[\#\phi(\Gamma_k)=\#(\phi(\Gamma_k)\cap\ker\alpha)\cdot \#\alpha(\phi(\Gamma_k)).\]
 Note that $\alpha\circ\phi :\Gamma_k\to \Aut(\Br\overline{X}[p])$ is the map giving the Galois action. Hence, $\alpha(\phi(\Gamma_k))=\alpha(\phi(\Gamma_l))$ by assumption. Therefore, 
\[[\phi(\Gamma_k):\phi(\Gamma_l)]=\frac{\#(\phi(\Gamma_k))}{\#(\phi(\Gamma_l))}=\frac{\#(\phi(\Gamma_k)\cap\ker\alpha)}{\#(\phi(\Gamma_l)\cap\ker\alpha)}.\] This is a power of $p$, since $\ker\alpha$ is a $p$-group. On the other hand, $[\phi(\Gamma_k):\phi(\Gamma_l)]$ divides $[\Gamma_k:\Gamma_l]=[l:k]$, which is coprime to $p$ by assumption.
Therefore, $[\phi(\Gamma_k):\phi(\Gamma_l)]=1$, as required.
\end{proof}

\begin{cor}\label{kum-ext3NU} 
Let $d\in \Z_{>0}$.
Let $k$ be a field of characteristic 0.
Let $X\in\scrA_{k,g}\cup \scrK_{k,g}$. Let $F/k$ be a finite extension such that $\Gamma_F$ acts trivially on $\Br\overline{X}[d]$. Let $l/k$ be a finite extension of degree coprime to $d$ which is linearly disjoint from $F/k$.
Then for all $n\in\NN$,
\[ \Res_{l/k}:(\Br X)[d^n]/(\Br_1 X)[d^n]\to (\Br X_l)[d^n]/(\Br_1 X_l)[d^n]\]
is an isomorphism and consequently
\[ \Res_{l/k}:(\Br X/\Br_1 X)\{d\}\to (\Br X_l/\Br_1 X_l)\{d\}\]
is an isomorphism.
\end{cor}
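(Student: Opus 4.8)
The plan is to reduce everything to Proposition~\ref{prop:nogrowthabvar} and Lemma~\ref{lem:ppower}: the heavy lifting is done by those two results, so the real task is to verify their hypotheses from the linear-disjointness assumption, and then to pass to the limit for the $d$-primary statement. The delicate (though elementary) point will be the Galois-theoretic bookkeeping translating linear disjointness of $l$ and $F$ into an equality of images of Galois groups.

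First I would analyse the action $\phi\colon\Gamma_k\to\Aut(\Br\overline{X}[d])$. Since $\Br\overline{X}[d]$ is finite (as recorded in Step~1 of the proof of Lemma~\ref{lem:ppower}), the kernel $\ker\phi$ is open and normal, so $K:=\overline{k}^{\ker\phi}$ is a finite Galois extension of $k$ with $\Gal(K/k)\cong\phi(\Gamma_k)$. The assumption that $\Gamma_F$ acts trivially on $\Br\overline{X}[d]$ says exactly that $\Gamma_F\subseteq\ker\phi$, i.e.\ $K\subseteq F$. The key step is then to show $\phi(\Gamma_l)=\phi(\Gamma_k)$. Because $K\subseteq F$ and $l,F$ are linearly disjoint over $k$, tensoring the inclusion $K\hookrightarrow F$ with $l$ over $k$ shows that $l$ and $K$ are linearly disjoint over $k$; in particular $l\cap K=k$. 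As $K/k$ is Galois, the image of $\Gamma_l$ under the restriction $\Gamma_k\to\Gal(K/k)$ is $\Gal(K/(l\cap K))=\Gal(K/k)$, i.e.\ $\phi(\Gamma_l)=\phi(\Gamma_k)$.

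Since $[l:k]$ is coprime to $d$, Lemma~\ref{lem:ppower} now upgrades the base case $\phi(\Gamma_l)=\phi(\Gamma_k)$ to the statement that, for every $n\in\NN$, the images of $\Gamma_l$ and $\Gamma_k$ in $\Aut(\Br\overline{X}[d^n])$ coincide. As the fixed subgroup of a group acting on an abelian group depends only on the image of the action, this yields $\Br\overline{X}[d^n]^{\Gamma_l}=\Br\overline{X}[d^n]^{\Gamma_k}$ for all $n$. Feeding this together with the coprimality of $[l:k]$ and $d$ into Proposition~\ref{prop:nogrowthabvar} gives that $\Res_{l/k}\colon (\Br X)[d^n]/(\Br_1 X)[d^n]\to (\Br X_l)[d^n]/(\Br_1 X_l)[d^n]$ is an isomorphism for every $n$, which is the first assertion.

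For the second assertion I would pass to the direct limit over $n$. Since $X$ and $X_l$ are smooth, their Brauer groups are torsion, so taking $d$-primary parts is an exact functor on the short exact sequence $0\to\Br_1 X\to\Br X\to\Br X/\Br_1 X\to 0$; hence $(\Br X/\Br_1 X)\{d\}\cong \Br X\{d\}/\Br_1 X\{d\}=\varinjlim_n\big((\Br X)[d^n]/(\Br_1 X)[d^n]\big)$, and likewise over $l$. The maps $\Res_{l/k}$ commute with the transition maps induced by $\Br\overline{X}[d^n]\subseteq\Br\overline{X}[d^{n+1}]$, so the direct limit of the isomorphisms from the first part is again an isomorphism, giving $\Res_{l/k}\colon (\Br X/\Br_1 X)\{d\}\xrightarrow{\sim}(\Br X_l/\Br_1 X_l)\{d\}$. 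The one spot warranting care is the linear-disjointness step, where one must check that disjointness descends to the Galois subextension $K\subseteq F$ and that $\phi(\Gamma_l)$ is correctly identified with $\Gal(K/(l\cap K))$.
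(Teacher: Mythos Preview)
Your proposal is correct and follows essentially the same approach as the paper: derive $\phi(\Gamma_l)=\phi(\Gamma_k)$ from linear disjointness, invoke Lemma~\ref{lem:ppower} to upgrade to all $d^n$, apply Proposition~\ref{prop:nogrowthabvar}, and pass to the direct limit. The only cosmetic difference is in the linear-disjointness step: the paper argues directly that $\Gamma_k=\Gamma_l\Gamma_F$ (as sets) and then $\phi(\Gamma_k)=\phi(\Gamma_l)\phi(\Gamma_F)=\phi(\Gamma_l)$, whereas you introduce the splitting field $K=\overline{k}^{\ker\phi}\subseteq F$ and use $l\cap K=k$; both are valid routes to the same conclusion.
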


\begin{proof}
Let $\phi:\Gamma_k\to \Aut(\Br\overline{X}[d])$ be the map giving the Galois action. Since $l/k$ is linearly disjoint from $F/k$, we have $\Gamma_k=\Gamma_l\Gamma_F$ and hence \[\phi(\Gamma_k)=\phi(\Gamma_l\Gamma_F)=\phi(\Gamma_l),\] since $\Gamma_F$ acts trivially on $\Br\overline{X}[d]$ by assumption.

 Now by Lemma~\ref{lem:ppower}, the image of $\Gamma_l$ in $\Aut(\Br\overline{X}[d^n])$ is the same as that of $\Gamma_k$, for all $n\in\NN$. Now the first statement follows from Proposition~\ref{prop:nogrowthabvar}. Since $X$ is smooth, $\Br X$ is a torsion abelian group by \cite{Grothendieck}. Hence, the natural inclusion
\(\Br X\{d\}/\Br_1 X\{d\}\hookrightarrow (\Br X/\Br_1 X)\{d\}\)
is an equality, and similarly for $X_l$. Since
\[\frac{\Br X\{d\}}{\Br_1 X\{d\}}=  \frac{\varinjlim\Br X[d^n]}{\varinjlim\Br_1 X[d^n]}= \varinjlim \frac{\Br X[d^n]}{\Br_1 X[d^n]},\]
the second statement follows from the first one.
\end{proof}

Next, we give a uniform result which applies to the whole family ($\scrA_{k,g}$ or $\scrK_{k,g}$) and every $n\in\NN$. We will need a couple of auxiliary lemmas.

\begin{defn}\label{def:Q}
Let $d,m\in\Z_{>0}$ and let $d$ have prime factorisation $d=\prod_{i=1}^r p_i^{\epsilon_i}$. We define
$Q_{d,m}:=\{q\in\mathbb{Z} \textrm{ prime} : q\mid \prod_{i=1}^r p_i(p_i^m-1)\}$ and $N_{d,m}:=\prod_{q\in Q_{d,m}}q$.
\end{defn}

\begin{lemma}\label{lem:same action}
Let $M$ be a finite group such that $\Aut M$ has exponent dividing $e$. Let $l/k$ be a finite extension of degree coprime to $e$. Let $\phi:\Gamma_k\to\Aut M$ be a homomorphism making $M$ into a $\Gamma_k$-module. Then $\phi(\Gamma_k)=\phi(\Gamma_l)$.
\end{lemma}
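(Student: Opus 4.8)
The plan is to use the hypothesis that $\Aut M$ has exponent dividing $e$ to force $\phi(\Gamma_k)$ to be a group whose order is only divisible by primes that also divide $e$, and then play this off against the fact that $[\Gamma_k:\Gamma_l]$ divides $[l:k]$, which is coprime to $e$. Since linear disjointness is not assumed here, the key structural input must come purely from the arithmetic of group orders.

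\medskip

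First I would observe that $\phi(\Gamma_k)$ is a finite subgroup of $\Aut M$ (it is finite because $\Aut M$ is finite, $M$ being a finite group). Since $\Aut M$ has exponent dividing $e$, every element of $\phi(\Gamma_k)$ has order dividing $e$; hence by Cauchy's theorem every prime dividing $\#\phi(\Gamma_k)$ must divide $e$. Next, $\phi(\Gamma_l)$ is a subgroup of $\phi(\Gamma_k)$, so the index $[\phi(\Gamma_k):\phi(\Gamma_l)]$ divides $\#\phi(\Gamma_k)$, and therefore every prime dividing this index also divides $e$. On the other hand, the restriction $\phi|_{\Gamma_l}$ factors through the quotient $\Gamma_k \to \phi(\Gamma_k)$, so $\phi(\Gamma_l)$ is the image of $\Gamma_l$ under the composite, and consequently $[\phi(\Gamma_k):\phi(\Gamma_l)]$ divides $[\Gamma_k:\Gamma_l]$, which in turn divides $[l:k]$.

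\medskip

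Combining these two divisibility statements: the index $[\phi(\Gamma_k):\phi(\Gamma_l)]$ divides $[l:k]$, which is coprime to $e$, yet every prime factor of the index divides $e$. The only positive integer with both properties is $1$, so $[\phi(\Gamma_k):\phi(\Gamma_l)]=1$ and hence $\phi(\Gamma_k)=\phi(\Gamma_l)$, as required. This mirrors exactly the coprimality endgame used in the proof of Lemma~\ref{lem:ppower}, where a $p$-power index was shown to be coprime to $p$ and hence trivial.

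\medskip

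I expect the only genuinely delicate point to be the claim that every prime dividing $[\phi(\Gamma_k):\phi(\Gamma_l)]$ divides $e$; this is where the exponent hypothesis is used, and one must be careful to argue it via the order (through Cauchy's theorem or Lagrange's theorem applied to a prime-order subgroup) rather than directly from the exponent, since a group of exponent dividing $e$ can of course have order larger than $e$. Everything else is elementary manipulation of indices, so no long computation is needed.
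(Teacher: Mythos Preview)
Your proof is correct and follows essentially the same approach as the paper: both argue that the index $[\phi(\Gamma_k):\phi(\Gamma_l)]$ divides $[l:k]$ on one hand, and divides a number whose prime factors all divide $e$ on the other (you use $\#\phi(\Gamma_k)$ via Cauchy, the paper uses $\#\Aut M$ via the same implicit Cauchy step), forcing the index to be $1$. The only cosmetic difference is that the paper works with $\#\Aut M$ throughout rather than $\#\phi(\Gamma_k)$, but the logic is identical.
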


\begin{proof}
We have $\phi(\Gamma_l) \leq \phi(\Gamma_k) \leq \Aut M$ with $[\phi(\Gamma_k):\phi(\Gamma_l)]$ dividing $[\Gamma_k:\Gamma_l]=[l:k]$ and with $\gcd([l:k], e) =\gcd([l:k], \exp(\Aut M)) =\gcd([l:k], \#\Aut M)=1$.
Since $M$ is finite, by Lagrange's theorem we have
\[\#\Aut M = [\Aut M : \phi(\Gamma_l)] \cdot \# \phi(\Gamma_l)= [\Aut M : \phi(\Gamma_k)] \cdot \#\phi(\Gamma_k),\]
implying that  
\([\Aut M : \phi(\Gamma_l)] =   [\Aut M : \phi(\Gamma_k)] [\phi(\Gamma_k): \phi(\Gamma_l)].\)
Hence, $[\phi(\Gamma_k): \phi(\Gamma_l)]$ divides both  $\# \Aut M $ and $[l:k]$.
Since $\gcd([l:k], \#\Aut M)=1$, it follows that $[\phi(\Gamma_k): \phi(\Gamma_l)] = 1$ and thus that $\phi(\Gamma_k) = \phi(\Gamma_l)$, as required.
\end{proof}

\begin{lemma}\label{lem:lgood}Let $d\in\Z_{>0}$ and let $g\in\Z_{>1}$. Let $k$ be a field of characteristic 0.
Let $l/k$ be a finite extension of degree coprime to $N_{d,g(2g-1)-1}$. Then for all $X\in\scrA_{k,g}\cup \scrK_{k,g}$, and all $n\in\N$, the image of $\Gamma_l$ in $\Aut(\Br\overline{X}[d^n])$ is the same as the image of $\Gamma_k$.
\end{lemma}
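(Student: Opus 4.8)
The plan is to reduce Lemma~\ref{lem:lgood} to the previously established Lemma~\ref{lem:ppower} by producing, for each prime $p \mid d$, an auxiliary finite group $M$ whose automorphism group has a controlled exponent, so that Lemma~\ref{lem:same action} applies. First I would recall from Step~1 of the proof of Lemma~\ref{lem:ppower} that for $X\in\scrA_{k,g}\cup\scrK_{k,g}$ we have $\Br\overline{X}[p]\cong(\Z/p\Z)^m$ as abelian groups, where $m=g(2g-1)-\rho$ for some Picard number $\rho$ with $1\le\rho\le g^2$; in particular $m\le g(2g-1)-1$. Hence $\Aut(\Br\overline{X}[p])\cong\GL_m(\F_p)$. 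The key numerical fact I would invoke is that the exponent of $\GL_m(\F_p)$ divides $p^{?}\prod_{j=1}^{m}(p^j-1)$, and that every prime dividing this quantity divides $p\prod_{j=1}^m(p^j-1)$. Since each $p^j-1$ divides $p^{m!}-1$ (or more simply, since a prime $q\ne p$ divides $|\GL_m(\F_p)|=p^{\binom m2}\prod_{j=1}^m(p^j-1)$ only if $q\mid p^j-1$ for some $j\le m$, and then $q\mid p^{j}-1 \mid p^{m}-1$ by the cyclic structure of $\F_{q}^\times$, using $\lcm$), every prime divisor of $\#\GL_m(\F_p)$ lies in the set of primes dividing $p(p^m-1)$.

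Concretely, I would argue as follows. Fix a prime $p\mid d$ and let $m_p=g(2g-1)-\rho_p\le g(2g-1)-1$ be the rank of $\Br\overline{X}[p]$. Every prime dividing $\#\GL_{m_p}(\F_p)=p^{m_p(m_p-1)/2}\prod_{j=1}^{m_p}(p^j-1)$ either equals $p$ or divides some $p^j-1$ with $1\le j\le m_p$; in the latter case, the order of $p$ modulo such a prime $q$ divides $j\le m_p\le g(2g-1)-1$, so $q\mid p^{g(2g-1)-1}-1$. Therefore every prime divisor of $\#\GL_{m_p}(\F_p)$ divides $p\,(p^{g(2g-1)-1}-1)$, hence divides $N_{d,g(2g-1)-1}$ by Definition~\ref{def:Q}. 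Applying Lemma~\ref{lem:same action} with $M=\Br\overline{X}[p]$ and $e=\#\Aut M=\#\GL_{m_p}(\F_p)$: the hypothesis $\gcd([l:k],N_{d,g(2g-1)-1})=1$ forces $\gcd([l:k],\exp(\Aut M))=1$, so we obtain $\phi(\Gamma_k)=\phi(\Gamma_l)$ for the action on $\Br\overline{X}[p]$.

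Having established that $\Gamma_l$ and $\Gamma_k$ have the same image in $\Aut(\Br\overline{X}[p])$ for every prime $p\mid d$, I would assemble these across the prime factorization $d=\prod_i p_i^{\epsilon_i}$. Using the product decomposition $\Br\overline{X}[d]=\prod_i\Br\overline{X}[p_i^{\epsilon_i}]$ together with the reduction-mod-$p_i$ maps, the equality of images on each $\Br\overline{X}[p_i]$ gives equality of images on $\Br\overline{X}[d]$. Since $[l:k]$ is coprime to $N_{d,g(2g-1)-1}$ and hence in particular coprime to $d$ (as each $p_i\mid N_{d,g(2g-1)-1}$), the hypotheses of Lemma~\ref{lem:ppower} are met, and that lemma upgrades the equality of images on $\Br\overline{X}[d]$ to equality of images on $\Br\overline{X}[d^n]$ for all $n\in\N$, which is exactly the conclusion.

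The main obstacle I anticipate is the number-theoretic bookkeeping that guarantees \emph{every} prime dividing $\#\GL_{m_p}(\F_p)$ is captured by $N_{d,g(2g-1)-1}$, uniformly in the unknown Picard number $\rho_p$. The subtle point is that $m_p$ is not known exactly—only bounded by $g(2g-1)-1$—so the argument must use that $p^j-1\mid p^{g(2g-1)-1}-1$ whenever $j\le g(2g-1)-1$, which holds because $j\mid g(2g-1)-1$ is \emph{not} needed; rather one uses that the multiplicative order of $p$ mod $q$ is at most $j\le g(2g-1)-1$ and divides any exponent it divides. Getting this divisibility crisp, and confirming that the single exponent $g(2g-1)-1$ simultaneously works for all primes $p\mid d$ and all $X$ in the family, is where care is required; the rest is a routine application of the two preceding lemmas.
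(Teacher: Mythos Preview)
Your central number-theoretic claim is false, and this is a genuine gap. You assert that if a prime $q\neq p$ divides $p^j-1$ for some $j\le m$, then $q\mid p^m-1$; equivalently, that the multiplicative order of $p$ modulo $q$ being at most $m$ forces it to divide $m$. This fails already for $p=2$, $q=3$, $j=2$, $m=3$: we have $3\mid 2^2-1$ but $3\nmid 2^3-1=7$. More to the point for the lemma, take $p=2$, $q=7$, $j=3$, and $g=2$ so that $g(2g-1)-1=5$: then $7\mid 2^3-1$ but $7\nmid 2^5-1=31$. Consequently your assertion that ``every prime divisor of $\#\GL_{m_p}(\F_p)$ divides $p(p^{g(2g-1)-1}-1)$'' is not established; for instance $\#\GL_3(\F_2)=168=2^3\cdot 3\cdot 7$, while $2(2^5-1)=62$ is divisible by neither $3$ nor $7$. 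Your own final paragraph flags exactly this step as the delicate one, but the justification offered (``the multiplicative order \dots divides any exponent it divides'') is a tautology and does not repair the implication. A secondary issue: your passage from equality of images on each $\Br\overline X[p_i]$ to equality of images on $\Br\overline X[d]$ via the product decomposition is not automatic, since equality of projections to each factor of a product does not in general force equality of the images inside the product.

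By contrast, the paper does not attempt this divisor bookkeeping by hand and does not route the argument through Lemma~\ref{lem:ppower}. It observes directly that $\Br\overline X[d^n]\cong(\Z/d^n\Z)^m$ with $m\le g(2g-1)-1$, invokes \cite[Theorem~4.1]{HillarRhea} to bound the primes dividing the exponent of $\Aut((\Z/d^n)^m)$ by the set $Q_{d,m}$, and then applies Lemma~\ref{lem:same action} once with $M=\Br\overline X[d^n]$ and $e$ a suitable power of $N_{d,g(2g-1)-1}$. So even setting aside the gap, your strategy differs from the paper's: you work at the level of $\Br\overline X[p]$ and try to bootstrap, while the paper works at the level of $\Br\overline X[d^n]$ from the outset and delegates the arithmetic of $\Aut((\Z/d^n)^m)$ to an external reference.
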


\begin{proof}
The proof of Lemma~\ref{lem:ppower} shows that, as an abelian group with no Galois action, $\Br\overline{X}[d^n]\cong (\Z/d^n\Z)^{g(2g-1)-\rho}$, where $1\leq \rho\leq g^2$. 
By \cite[Theorem 4.1]{HillarRhea}, the primes dividing the exponent of $\Aut ((\mathbb{Z}/d^n)^m)$ belong to $Q_{d,m}$.
 Now apply Lemma~\ref{lem:same action} with $e$ a suitable power of $N_{d,g(2g-1)-1}$ to see that the image of $\Gamma_k$ in $\Aut(\Br\overline{X}[d^n])$ is the same as the image of $\Gamma_l$.
\end{proof}

\begin{cor}\label{cor:kum-ext3}Let $d\in\Z_{>0}$ and let $g\in\Z_{>1}$. Let $k$ be a field of characteristic 0.
Then for all  $X\in\scrA_{k,g}\cup \scrK_{k,g}$, all $n \in \NN$, and all finite extensions $l/k$ of degree coprime to $N_{d,g(2g-1)-1}$,
\[ \Res_{l/k}:(\Br X)[d^n]/(\Br_1 X)[d^n]\to (\Br X_l)[d^n]/(\Br_1 X_l)[d^n]\]
is an isomorphism and
consequently \[  \Res_{l/k}:(\Br X/\Br_1 X)\{d\} \to (\Br X_l/\Br_1 X_l)\{d\}\]
is an isomorphism.
\end{cor}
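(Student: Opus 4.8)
The plan is to assemble the two ingredients already in place: the uniform control of the Galois image furnished by Lemma~\ref{lem:lgood}, and the cohomological comparison of Proposition~\ref{prop:nogrowthabvar}. This corollary is the uniform analogue of Corollary~\ref{kum-ext3NU}, and the argument follows the same template, with Lemma~\ref{lem:lgood} playing the role that the linear-disjointness hypothesis played there.

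First I would record a divisibility observation needed to apply Proposition~\ref{prop:nogrowthabvar}: since every prime $p_i$ dividing $d$ satisfies $p_i\mid\prod_j p_j(p_j^{g(2g-1)-1}-1)$, we have $p_i\in Q_{d,g(2g-1)-1}$, so $N_{d,g(2g-1)-1}$ is divisible by every prime factor of $d$. Hence any finite extension $l/k$ of degree coprime to $N_{d,g(2g-1)-1}$ automatically has degree coprime to $d$.

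Next, fix $X\in\scrA_{k,g}\cup\scrK_{k,g}$, an integer $n\in\NN$, and $l/k$ as in the hypothesis. By Lemma~\ref{lem:lgood}, the image of $\Gamma_l$ in $\Aut(\Br\overline{X}[d^n])$ coincides with that of $\Gamma_k$. Since the two groups act through the same subgroup of $\Aut(\Br\overline{X}[d^n])$, their fixed submodules agree, giving $\Br\overline{X}[d^n]^{\Gamma_l}=\Br\overline{X}[d^n]^{\Gamma_k}$. Combined with the coprimality of $[l:k]$ and $d$ from the previous paragraph, this is precisely the hypothesis of Proposition~\ref{prop:nogrowthabvar}, which therefore yields the first isomorphism.

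The second isomorphism then follows by passing to the direct limit over $n$, exactly as in the proof of Corollary~\ref{kum-ext3NU}: since $\Br X$ is torsion by \cite{Grothendieck}, one has $(\Br X/\Br_1 X)\{d\}=\varinjlim_n (\Br X)[d^n]/(\Br_1 X)[d^n]$, and likewise for $X_l$; the restriction maps are compatible with the transition maps, so the colimit of isomorphisms is an isomorphism. I expect no genuine obstacle here: the real content lies in Lemma~\ref{lem:lgood} and Proposition~\ref{prop:nogrowthabvar}, and the only points requiring care are the passage from equal Galois images to equal fixed submodules and the bookkeeping ensuring coprimality to $d$.
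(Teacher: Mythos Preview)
Your proposal is correct and follows essentially the same approach as the paper: observe that coprimality to $N_{d,g(2g-1)-1}$ forces coprimality to $d$, invoke Lemma~\ref{lem:lgood} to obtain equal Galois images (hence equal invariants) on $\Br\overline{X}[d^n]$, and then apply Proposition~\ref{prop:nogrowthabvar}. You have simply spelled out a few details the paper leaves implicit, including the passage from equal images to equal fixed submodules and the direct-limit argument for the $\{d\}$-part.
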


\begin{proof}
Clearly, if $[l:k]$ is coprime to $N_{d,g(2g-1)-1}$, then it is coprime to $d$.
Now Lemma~\ref{lem:lgood} allows us to apply Proposition~\ref{prop:nogrowthabvar}.
\end{proof}

\section{Controlling the  Brauer group of geometrically Kummer varieties over field extensions}\label{sec:ControlKummer}

Let $k$ be a field of characteristic $0$. In this section, we give sufficient criteria for the $d$-primary part of the algebraic part of the Brauer group of a Kummer variety $X$ to be unchanged under an extension of the base field. We then combine this with the results of Section~\ref{sec:ControlAbVar} to give sufficient criteria for the $d$-primary part of the quotient of the Brauer group of a Kummer variety $X$ by the subgroup $\Br_0X$ of constant Brauer elements to be unchanged under an extension of the base field.

 The following well-known lemma tells us how to control the algebraic part of the Brauer group over finite extensions of the base field.
 
 \begin{lemma}\label{lemBr0} Let $X$ be a smooth, proper, geometrically integral variety over a number field $k$ with $\Pic \Xbar$ free of rank $r$. Let $k'/k$ be a finite Galois extension such that $\Pic X_{k'}=\Pic \Xbar $.  If a finite extension $l/k$ is linearly disjoint 
 from $k'/k$, then 
the natural homomorphism \[\Res_{l/k}: \Br_1 X/\Br_0 X \to \Br_1 X_{l}/\Br_0 X_{l}\]
 is an isomorphism. 
 \end{lemma}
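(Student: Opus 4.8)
The plan is to translate the statement about the algebraic Brauer group into one about the Galois cohomology of $\Pic\Xbar$, where the hypotheses become transparent. First I would invoke the Hochschild--Serre spectral sequence $\mathrm{H}^p(\Gamma_k,\mathrm{H}^q_{\et}(\Xbar,\G_m))\Rightarrow \mathrm{H}^{p+q}_{\et}(X,\G_m)$ (see e.g.\ \cite{Skorobogatov-Torsors}), whose low-degree exact sequence contains
\[
\Br k \to \Br_1 X \to \mathrm{H}^1(\Gamma_k,\Pic\Xbar)\to \mathrm{H}^3(\Gamma_k,\kbar^\times),
\]
using Hilbert~90 to identify $\ker(\Br X\to\Br\Xbar)$ with $\Br_1 X$, and using that the first map is the structure-morphism pullback, whose image is $\Br_0 X$ by definition. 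Since $k$ is a number field, $\mathrm{H}^3(\Gamma_k,\kbar^\times)=0$ (a standard consequence of global class field theory), so exactness yields a canonical isomorphism $\Br_1 X/\Br_0 X\xrightarrow{\sim}\mathrm{H}^1(\Gamma_k,\Pic\Xbar)$, and likewise over $l$ (note $\overline{X_l}=\Xbar$). As the spectral sequence is functorial for the base change $X_l\to X$, these isomorphisms fit into a commutative square with $\Res_{l/k}$ on the left and the cohomological restriction on the right. It therefore suffices to prove that
\[
\Res_{l/k}\colon \mathrm{H}^1(\Gamma_k,\Pic\Xbar)\to \mathrm{H}^1(\Gamma_l,\Pic\Xbar)
\]
is an isomorphism.

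Next I would exploit the hypothesis $\Pic X_{k'}=\Pic\Xbar$, which says precisely that $\Gamma_{k'}$ acts trivially on $M:=\Pic\Xbar$, so that the $\Gamma_k$-action factors through $G:=\Gal(k'/k)$. The key computational input is that $\mathrm{H}^1(\Gamma_{k'},M)=\Hom_{\mathrm{cont}}(\Gamma_{k'},M)=0$: indeed $M$ is finitely generated and free of rank $r$, hence torsion-free, so any continuous homomorphism from the profinite group $\Gamma_{k'}$ to $M\cong\Z^r$ has finite, hence trivial, image. The inflation--restriction sequence for $k\subset k'$ then gives an isomorphism $\mathrm{inf}\colon \mathrm{H}^1(G,M)\xrightarrow{\sim}\mathrm{H}^1(\Gamma_k,M)$. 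Running the same argument over $l$, using that $\Gamma_{lk'}\subseteq\Gamma_{k'}$ also acts trivially on $M$, gives $\mathrm{inf}\colon \mathrm{H}^1(\Gal(lk'/l),M)\xrightarrow{\sim}\mathrm{H}^1(\Gamma_l,M)$.

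Finally I would bring in linear disjointness: since $k'/k$ is Galois and $l/k$ is linearly disjoint from it, restriction defines an isomorphism $\Gal(lk'/l)\xrightarrow{\sim}\Gal(k'/k)=G$ compatible with the actions on $M$. The square of group homomorphisms relating the inclusion $\Gamma_l\hookrightarrow\Gamma_k$, the two quotient maps onto $G$ and $\Gal(lk'/l)$, and this restriction isomorphism commutes, because both composites $\Gamma_l\to G$ compute the $\Gamma_l$-action on $M$. By functoriality of group cohomology the induced square
\[
\begin{CD}
\mathrm{H}^1(G,M) @>\mathrm{inf}>> \mathrm{H}^1(\Gamma_k,M)\\
@V{\cong}VV @VV{\Res_{l/k}}V\\
\mathrm{H}^1(\Gal(lk'/l),M) @>\mathrm{inf}>> \mathrm{H}^1(\Gamma_l,M)
\end{CD}
\]
commutes; three of its four maps are isomorphisms (the horizontal inflations by the previous step, the left vertical by linear disjointness), so $\Res_{l/k}$ is an isomorphism, which transports back through the first paragraph to the desired statement. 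The main obstacle here is bookkeeping rather than depth: I must check that the several functoriality compatibilities genuinely commute (Hochschild--Serre under base change, and inflation against the restriction maps on Galois groups) and that the standard input $\mathrm{H}^3(\Gamma_k,\kbar^\times)=0$ is correctly invoked, as the core cohomology vanishing is forced cheaply by torsion-freeness of $\Pic\Xbar$.
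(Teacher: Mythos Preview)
Your proof is correct and follows essentially the same approach as the paper: both reduce to showing $\Res_{l/k}$ is an isomorphism on $\mathrm{H}^1(-,\Pic\Xbar)$ via Hochschild--Serre, pass to the finite quotients $\Gal(k'/k)\cong\Gal(lk'/l)$, and use linear disjointness. You simply make explicit what the paper leaves implicit, namely the vanishing of $\mathrm{H}^3(\Gamma_k,\kbar^\times)$ for a number field and the inflation--restriction argument (with $\mathrm{H}^1(\Gamma_{k'},\Pic\Xbar)=0$ from torsion-freeness) that justifies the identification $\mathrm{H}^1(\Gamma_k,\Pic\Xbar)=\mathrm{H}^1(\Gal(k'/k),\Pic\Xbar)$.
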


 \begin{proof} We have $ \mathrm{H}^1_{\et}(k, \Pic \Xbar)  =  \mathrm{H}^1_{\textrm{cts}}(\Gal(k'/k), \Pic X_{k'})$ and, writing $l':=k' l$ for the compositum field,  $  \mathrm{H}^1_{\textrm{cts}}(\Gal(l'/l), \Pic X_{l'}) =  \mathrm{H}^1_{\et}(l, \Pic \Xbar)$.  Moreover, the restriction map induces an isomorphism
\[
\Res_{l/k}:  \mathrm{H}^1_{\textrm{cts}}(\Gal(k'/k), \Pic X_{k'}) \xrightarrow{\sim}\mathrm{H}^1_{\textrm{cts}}(\Gal(l'/l),\Pic X_{l'}).
 \]
The Hochschild-Serre spectral sequence 
\[\mathrm{H}^p_{\textrm{cts}}(\Gal(\kbar/k), \mathrm{H}_{\et}^q (\Xbar, \G_m)) \Rightarrow \mathrm{H}_{\et}^{p+q}(X, \G_m)\]
 yields $\Br_1 X/\Br_0 X = \mathrm{H}^1_{\et}(k, \Pic \Xbar)$, and similarly $\Br_1 X_l/\Br_0 X_l = \mathrm{H}^1_{\et}(l, \Pic \Xbar)$, whence the result. 
 \end{proof}

 We also have a  uniform version of  Lemma \ref{lemBr0}.

  \begin{lemma}\label{lemBr0U} Let $\frakF_{k,r}$ be the family of all smooth, proper, geometrically integral varieties over a number field $k$ with $\Pic \Xbar$ free of rank $r$.  If a finite extension $l/k$ has degree coprime to $\# \GL_r(\F_3)$, then 
the natural homomorphism \[\Res_{l/k}: \Br_1 X/\Br_0 X \to \Br_1 X_{l}/\Br_0 X_{l}\]
 is an isomorphism for all $X \in \frakF_{k,r}$. 
 \end{lemma}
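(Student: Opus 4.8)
The plan is to reduce the uniform statement of Lemma~\ref{lemBr0U} to the non-uniform Lemma~\ref{lemBr0} by producing, for each $X\in\frakF_{k,r}$, a suitable finite Galois extension $k'/k$ that splits the Picard group and whose degree is controlled uniformly across the family. The only genuine content over Lemma~\ref{lemBr0} is that the linear disjointness hypothesis there gets replaced by the numerical hypothesis $\gcd([l:k],\#\GL_r(\F_3))=1$, so the heart of the matter is to bound the relevant splitting field uniformly and then trade linear disjointness for coprimality of degrees, exactly as was done for the transcendental part in Section~\ref{sec:ControlAbVar}.

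First I would fix $X\in\frakF_{k,r}$ and let $k'/k$ be the minimal extension over which all of $\Pic\Xbar$ is defined, i.e. the fixed field of the kernel of the action $\Gamma_k\to\Aut(\Pic\Xbar)\cong\GL_r(\Z)$ arising from the free rank-$r$ $\Gamma_k$-module $\Pic\Xbar$. This $k'/k$ is Galois with group a finite subgroup $G\leq\GL_r(\Z)$, and $\Pic X_{k'}=\Pic\Xbar$, so the hypotheses of Lemma~\ref{lemBr0} are met once disjointness is arranged. The key classical input is Minkowski's theorem: the reduction-mod-$3$ map $\GL_r(\Z)\to\GL_r(\F_3)$ is injective on finite subgroups (reduction mod any odd prime is faithful on finite subgroups of $\GL_r(\Z)$), so $G$ embeds into $\GL_r(\F_3)$ and hence $[k':k]=\#G$ divides $\#\GL_r(\F_3)$. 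This is the uniform bound that makes the statement work for every $X$ in the family simultaneously, independent of $X$.

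Next I would convert the coprimality hypothesis into linear disjointness. Since $[l:k]$ is coprime to $\#\GL_r(\F_3)$ and $[k':k]=\#G$ divides $\#\GL_r(\F_3)$, we have $\gcd([l:k],[k':k])=1$. A finite extension $l/k$ whose degree is coprime to the degree of a finite Galois extension $k'/k$ is automatically linearly disjoint from $k'/k$: writing $l'=k'l$ for the compositum, $[l':l]$ divides $[k':k]$ while also $[l':k]=[l':l][l:k]$ is divisible by $[l:k]$, and comparing with $[l':k]\leq[k':k][l:k]$ forces $[l':l]=[k':k]$, which is precisely linear disjointness. I would record this elementary degree computation as it is the mechanism replacing the geometric disjointness hypothesis of Lemma~\ref{lemBr0}. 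Having produced a $k'/k$ satisfying all hypotheses of Lemma~\ref{lemBr0} and verified disjointness, the conclusion that $\Res_{l/k}:\Br_1 X/\Br_0 X\to\Br_1 X_l/\Br_0 X_l$ is an isomorphism follows immediately, and since the bound $[k':k]\mid\#\GL_r(\F_3)$ did not depend on $X$, the same $l/k$ works uniformly for all $X\in\frakF_{k,r}$.

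The step I expect to be the main obstacle, conceptually, is the invocation of Minkowski's faithfulness theorem to bound $[k':k]$: one must be careful that the relevant Galois action is really through a finite subgroup of $\GL_r(\Z)$ (which it is, because $\Pic\Xbar$ is free of finite rank and its automorphism group acts through a finite image since $\Pic X_{k'}=\Pic\Xbar$ for some finite $k'$), and that reduction mod $3$ is faithful on this finite subgroup. Everything else is a routine degree count and a direct appeal to Lemma~\ref{lemBr0}; the novelty is entirely in establishing the uniform numerical bound $\#\GL_r(\F_3)$ and in the standard coprimality-implies-disjointness observation.
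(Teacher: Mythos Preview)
Your proposal is correct and follows essentially the same approach as the paper's proof: define $k'$ as the fixed field of the kernel of the Galois action on $\Pic\Xbar$, use Minkowski's theorem (which the paper invokes via \cite[Prop.~2.1]{VAV16}) to bound $[k':k]\mid\#\GL_r(\F_3)$, deduce coprimality and hence linear disjointness, then apply Lemma~\ref{lemBr0}. One small remark: your degree argument for linear disjointness (``comparing with $[l':k]\leq[k':k][l:k]$ forces $[l':l]=[k':k]$'') is not quite complete as written; the clean way is to note that $[k'\cap l:k]$ divides both $[k':k]$ and $[l:k]$, hence equals $1$, and then use that $k'/k$ Galois together with $k'\cap l=k$ gives linear disjointness.
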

 
 \begin{proof} Let $X \in \frakF_{k,r}$. Since $\Gamma_k$ acts on $\Pic \Xbar$, we have a map $\phi_X: \Gamma_k \to \Aut (\Pic \Xbar)$. Let $k'_X$ be the fixed field of $\ker \phi_X$. Then $[k'_X:k] = \# \im \phi_X$. Arguing in the same way as in the proof of \cite[Prop. 2.1]{VAV16}, we see that $\im \phi_X$ is isomorphic to a finite subgroup of $\GL_r(\Z)$ and hence to a finite subgroup of $\GL_r(\F_3)$. Hence, $[k'_X:k]$ divides $\# \GL_r(\F_3)$. Since $l/k$ is coprime to $\# \GL_r(\F_3)$, it follows that $[l:k]$ is coprime to $[k'_X:k]$ and hence that $l/k$ and $k'_X/k$ are linearly disjoint. Therefore, we can apply Lemma \ref{lemBr0} to deduce that \(\Res_{l/k}: \Br_1 X/\Br_0 X \to \Br_1 X_{l}/\Br_0 X_{l}\) is an isomorphism.
 \end{proof}
 
  \begin{remark} \label{rem:kumfreerank}
 If $X$ is geometrically a  Kummer variety over $k$ of dimension $g$, then $\Pic \Xbar$ is a finitely-generated free $\Z$-module of rank $\leq 2^{2g} + g^2$ (see \cite[Cor. 2.4]{SZ-KumVar}).
 \end{remark}

 \begin{theorem}\label{thmisoNU}
  Let $k$ be a number field,  let $d,n\in\Z_{>0}$, let $g\in\Z_{>1}$ and let $X\in   \scrK_{k,g}$.
 Fix a finite Galois extension $k'/k$ such that $\Pic X_{k'}=\Pic \Xbar $. Let $F/k$ be a finite extension such that $\Gamma_F$ acts trivially on $\Br\overline{X}[d]$.
 Let $l/k$ be a finite extension of degree coprime to $d$ which is linearly disjoint from $Fk'/k$. Then 
\[ \Res_{l/k}:(\Br X)[d^n]/(\Br_0 X)[d^n]\rightarrow (\Br X_l)[d^n]/(\Br_0 X_l)[d^n]\]
 is an isomorphism and consequently
 \[\Res_{l/k}:(\Br X/\Br_0 X)\{d\}\to(\Br X_l/\Br_0 X_l)\{d\}\]
 is an isomorphism.
\end{theorem}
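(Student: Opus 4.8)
The plan is to exploit the filtration $\Br_0 X\subseteq\Br_1 X\subseteq\Br X$ and to combine the control of the algebraic part coming from Lemma~\ref{lemBr0} with the control of the transcendental part coming from Corollary~\ref{kum-ext3NU}. First I would record that the hypotheses are arranged precisely so that both results apply. Since $l/k$ is linearly disjoint from $Fk'/k$, it is linearly disjoint from each of the subextensions $F/k$ and $k'/k$, and moreover $[l:k]$ is coprime to $d$. By Remark~\ref{rem:kumfreerank}, $\Pic\Xbar$ is free of finite rank, so Lemma~\ref{lemBr0} shows that $\Res_{l/k}\colon\Br_1 X/\Br_0 X\to\Br_1 X_l/\Br_0 X_l$ is an isomorphism, while Corollary~\ref{kum-ext3NU} shows that $\Res_{l/k}\colon(\Br X)[d^n]/(\Br_1 X)[d^n]\to(\Br X_l)[d^n]/(\Br_1 X_l)[d^n]$ is an isomorphism for every $n$.

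For the first displayed statement I would fix $n$ and consider the nested subgroups $(\Br_0 X)[d^n]\subseteq(\Br_1 X)[d^n]\subseteq(\Br X)[d^n]$ of $\Br X$, where $(\Br_i X)[d^n]=\Br_i X\cap(\Br X)[d^n]$. The third isomorphism theorem gives a short exact sequence
\[
0\to\frac{(\Br_1 X)[d^n]}{(\Br_0 X)[d^n]}\to\frac{(\Br X)[d^n]}{(\Br_0 X)[d^n]}\to\frac{(\Br X)[d^n]}{(\Br_1 X)[d^n]}\to 0,
\]
and likewise over $l$; these assemble into a commutative ladder whose vertical maps are induced by $\Res_{l/k}$. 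The rightmost vertical map is an isomorphism by Corollary~\ref{kum-ext3NU}. Thus, if the leftmost vertical map $\Res_{l/k}\colon(\Br_1 X)[d^n]/(\Br_0 X)[d^n]\to(\Br_1 X_l)[d^n]/(\Br_0 X_l)[d^n]$ is an isomorphism, the five lemma forces the middle map to be an isomorphism, which is exactly the first displayed claim.

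The main obstacle is this leftmost map, because the subgroup $(\Br_1 X)[d^n]/(\Br_0 X)[d^n]$ is \emph{not} intrinsic to the quotient $G:=\Br_1 X/\Br_0 X$: a class in $G$ lies in it only if it lifts to a $d^n$-torsion element of $\Br_1 X$, and $\Res\colon\Br_1 X\to\Br_1 X_l$ is not itself an isomorphism, so the isomorphism of Lemma~\ref{lemBr0} does not obviously respect these subgroups. I would resolve this using corestriction. Write $\psi=\Res_{l/k}\colon G\xrightarrow{\sim}G_l$ for the isomorphism of Lemma~\ref{lemBr0}, and let $H\subseteq G$, $H_l\subseteq G_l$ denote the two subgroups above. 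Then $\psi$ maps $H$ into $H_l$ and is injective there. For surjectivity, take $\bar b\in H_l$ lifting to some $d^n$-torsion $b\in\Br_1 X_l$, and set $\bar a=\psi^{-1}(\bar b)$. Since $d^n\bar b=0$ and $\psi$ is injective, $\bar a\in G[d^n]$. Since $\Cor_{l/k}$ preserves $d^n$-torsion and $\Cor_{l/k}\circ\Res_{l/k}=[l:k]$, the class $[l:k]\bar a$ lifts to the $d^n$-torsion element $\Cor_{l/k}(b)$, so $[l:k]\bar a\in H$. Finally, as $\gcd([l:k],d)=1$, choose $u\in\Z$ with $u[l:k]\equiv 1\pmod{d^n}$; because $\bar a\in G[d^n]$ we get $\bar a=u\,([l:k]\bar a)\in H$, whence $\psi(\bar a)=\bar b$ and the leftmost map is an isomorphism.

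The second displayed statement would then follow by passing to the direct limit over $n$. Since direct limits are exact and $\Br X$ is torsion by \cite{Grothendieck} (as used in Corollary~\ref{kum-ext3NU}), one has $\varinjlim_n (\Br X)[d^n]/(\Br_0 X)[d^n]=(\Br X/\Br_0 X)\{d\}$, compatibly with $\Res_{l/k}$, and similarly over $l$; so the isomorphisms established for all $n$ assemble into the isomorphism $\Res_{l/k}\colon(\Br X/\Br_0 X)\{d\}\to(\Br X_l/\Br_0 X_l)\{d\}$.
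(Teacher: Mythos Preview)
Your proof follows the same route as the paper's: the filtration $\Br_0\subseteq\Br_1\subseteq\Br$ yields a commutative ladder of short exact sequences, and the five lemma reduces the claim to Lemma~\ref{lemBr0} on the left and Corollary~\ref{kum-ext3NU} on the right, with the $\{d\}$-primary statement obtained by passing to the direct limit.

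The one place where you go beyond the paper is the leftmost vertical map. The paper simply asserts that $\Res_{l/k}\colon (\Br_1 X)[d^n]/(\Br_0 X)[d^n]\to (\Br_1 X_l)[d^n]/(\Br_0 X_l)[d^n]$ is an isomorphism ``by Lemma~\ref{lemBr0}''. You correctly observe that Lemma~\ref{lemBr0} only gives an isomorphism on the full quotient $\Br_1 X/\Br_0 X$, and that the subgroup $(\Br_1 X)[d^n]/(\Br_0 X)[d^n]$ is not obviously intrinsic to that quotient (a $d^n$-torsion class in the quotient need not lift to a $d^n$-torsion element of $\Br_1 X$). Your corestriction argument---using that $\Cor_{l/k}$ preserves $d^n$-torsion, sends $\Br_0 X_l$ into $\Br_0 X$, and satisfies $\Cor\circ\Res=[l:k]$ with $\gcd([l:k],d)=1$---cleanly closes this gap. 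The argument is correct, and on this point your write-up is more careful than the paper's.
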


\begin{proof} We have the following commutative diagram with exact rows
 \[ \begin{tikzpicture}
  \matrix (m) [matrix of math nodes, row sep=1.5em,
    column sep=3em]{
0 &\frac{(\Br_1 X)[d^n]}{(\Br_0 X)[d^n]}      & \frac{(\Br X)[d^n]}{(\Br_0 X)[d^n]}      &  \frac{(\Br X)[d^n]}{(\Br_1 X)[d^n]}     & 0\\
0 &\frac{(\Br_1 X_{l})[d^n]}{(\Br_0 X_{l})[d^n]}     & \frac{(\Br X_{l})[d^n]}{(\Br_0 X_{l})[d^n]}       &  \frac{(\Br X_{l})[d^n]}{(\Br_1 X_{l})[d^n]}      & 0,\\
};
  \path[-stealth]
  (m-1-1) edge (m-1-2) 
   (m-1-2) edge (m-1-3)
   (m-1-3) edge (m-1-4)
   (m-1-4) edge (m-1-5) 
     (m-2-1) edge (m-2-2) 
   (m-2-2) edge (m-2-3)
   (m-2-3) edge (m-2-4)
   (m-2-4) edge (m-2-5) 
  (m-1-2) edge node [left] {\tiny$\Res_{l/k}$}  node [above, sloped] {$\sim$}(m-2-2) 
   (m-1-3) edge node [left] {\tiny$\Res_{l/k}$}(m-2-3) 
   (m-1-4) edge node [left] {\tiny$\Res_{l/k}$} node [above, sloped] {$\sim$}(m-2-4) 
;
\end{tikzpicture}\]
where the  first and third vertical arrows are isomorphisms  by Lemma \ref{lemBr0} and by  Corollary~\ref{kum-ext3NU}, respectively. It follows from the commutativity of the diagram that the middle vertical arrow is also an isomorphism.
\end{proof}

We also have the following uniform version.

\begin{theorem} \label{thmiso}Let $k$ be a number field. Fix $d \in \Z_{>0}$ and $g \in \Z_{>1}$. 
Then, for all  $X \in   \scrK_{k,g}$, all $n\in\NN$, and all finite extensions $l/k$ of degree coprime to $\# \GL_{2^{2g} + g^2}(\F_3)\cdot N_{d,g(2g-1)-1}$,
the natural homomorphism  
  \[ \Res_{l/k}:(\Br X)[d^n]/(\Br_0 X)[d^n]\rightarrow (\Br X_l)[d^n]/(\Br_0 X_l)[d^n]\]
 is an isomorphism and consequently 
\[\Res_{l/k}: (\Br X/\Br_0 X)\{d\} \to (\Br X_{l}/\Br_0 X_{l})\{d\}\]
 is an isomorphism. 
\end{theorem}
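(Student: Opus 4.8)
The plan is to prove this as the uniform analogue of Theorem~\ref{thmisoNU}, reusing verbatim the commutative diagram with exact rows from that proof. Its outer columns are the algebraic part $\frac{(\Br_1 X)[d^n]}{(\Br_0 X)[d^n]}$ and the transcendental part $\frac{(\Br X)[d^n]}{(\Br_1 X)[d^n]}$, and its middle column is $\frac{(\Br X)[d^n]}{(\Br_0 X)[d^n]}$; the rows are exact by the third isomorphism theorem applied to $(\Br_0 X)[d^n]\subset(\Br_1 X)[d^n]\subset(\Br X)[d^n]$. The only change from Theorem~\ref{thmisoNU} is to feed in the \emph{uniform} ingredient results (Lemma~\ref{lemBr0U} and Corollary~\ref{cor:kum-ext3}) in place of the disjointness-based Lemma~\ref{lemBr0} and Corollary~\ref{kum-ext3NU}, so that a single coprimality condition on $[l:k]$ replaces the two linear-disjointness hypotheses and covers the whole family $\scrK_{k,g}$ at once.

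The first step is the arithmetic bookkeeping that reduces everything to one integer. Writing $R:=2^{2g}+g^2$, Remark~\ref{rem:kumfreerank} guarantees that every $X\in\scrK_{k,g}$ has $\Pic\Xbar$ free of some rank $r=r_X\le R$, so $X\in\frakF_{k,r}$. I would observe that the primes dividing $\#\GL_r(\F_3)=3^{r(r-1)/2}\prod_{j=1}^r(3^j-1)$ form a subset of those dividing $\#\GL_R(\F_3)$ whenever $r\le R$. Hence any finite extension $l/k$ whose degree is coprime to $\#\GL_{R}(\F_3)\cdot N_{d,g(2g-1)-1}$ is automatically coprime both to $\#\GL_{r_X}(\F_3)$ and to $N_{d,g(2g-1)-1}$ (and hence to $d$, as noted in the proof of Corollary~\ref{cor:kum-ext3}).

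With this in hand I would establish the two outer vertical isomorphisms. For the algebraic part, Lemma~\ref{lemBr0U} applied with $r=r_X$ shows that $\Res_{l/k}\colon \Br_1 X/\Br_0 X\to\Br_1 X_l/\Br_0 X_l$ is an isomorphism, which controls the left-hand column exactly as in the proof of Theorem~\ref{thmisoNU}. For the transcendental part, Corollary~\ref{cor:kum-ext3} shows that $\Res_{l/k}\colon (\Br X)[d^n]/(\Br_1 X)[d^n]\to(\Br X_l)[d^n]/(\Br_1 X_l)[d^n]$ is an isomorphism, which controls the right-hand column. The five lemma then forces the middle vertical map to be an isomorphism, proving the $[d^n]$-torsion statement for every $n$; the $d$-primary statement follows by passing to the direct limit over $n$, exactly as in the final paragraph of the proof of Corollary~\ref{kum-ext3NU}, using that $\Br X$ is torsion.

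I expect the only genuine content beyond a transcription of Theorem~\ref{thmisoNU} to be this first step: verifying that coprimality to the single explicit integer $\#\GL_{R}(\F_3)\cdot N_{d,g(2g-1)-1}$ suffices \emph{uniformly}, even though the Picard rank $r_X$ (and hence the relevant $\#\GL_{r_X}(\F_3)$) varies across the family. The point that makes this work is the combination of the uniform bound $r_X\le R$ from Remark~\ref{rem:kumfreerank} with the monotonicity of the set of prime divisors of $\#\GL_r(\F_3)$ in $r$; once that is in place, the remainder is the same diagram chase already carried out in the non-uniform case.
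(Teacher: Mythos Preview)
Your proposal is correct and follows the same approach as the paper, which simply says the result follows from Lemma~\ref{lemBr0U} together with Remark~\ref{rem:kumfreerank} and Corollary~\ref{cor:kum-ext3}. You spell out more carefully than the paper does why coprimality to $\#\GL_R(\F_3)$ with $R=2^{2g}+g^2$ suffices even though the actual Picard rank $r_X$ may be smaller; this is a detail the paper leaves implicit.
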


\begin{proof} This follows from Lemma \ref{lemBr0U} together with Remark  \ref{rem:kumfreerank} and Corollary~\ref{cor:kum-ext3}.
\end{proof}


\section{From rational points to 0-cycles for geometrically Kummer varieties}\label{sec:0-cyc}

In this section, $k$ will always be a number field.

Recall the construction of the Brauer-Manin set for 0-cycles.
Let $X$ be a smooth, geometrically integral variety over $k$. Let $Z_0(X_l)$ denote the set of 0-cycles of $X$ over a field extension $l/k$, that is, the set of formal sums $\sum_{x \in X^{(0)}_l} n_x x$. If $\delta \in \Z_{\geq 0}$, we denote by $Z_0^\delta(X_l) \subset Z_0(X_l)$ the subset of 0-cycles of degree $\delta$, that is, 0-cycles $z:=  \sum_{x \in X^{(0)}_l} n_x x$ such that $\deg(z):= \sum_{x \in X^{(0)}_l} [l(x):l] n_x = \delta$. We can extend the Brauer-Manin pairing to 0-cycles of degree $\delta$ by defining, for any subset $B \subset \Br X$, 
\[ 
\begin{array}{lll}
\langle \ \ , \ \  \rangle_{\textrm{BM}}: &\prod_{v \in \Omega_k } Z^\delta_0(X_{k_v}) \times B & \to \Q/\Z\\
& \left( \left( \sum_{x_v \in X^{(0)}_{k_v}} n_{x_v} x_v \right)_{v },  \alpha \right) & \mapsto  \sum_{v } \inv_v\left(  \sum_{x_v \in X^{(0)}_{k_v}} n_{x_v} \Cor_{k_v(x_{v})/k_v}\alpha(x_v) \right).
\end{array} \]

\begin{defn} Let $X$ be a smooth, geometrically integral variety over a number field $k$. Let $B \subset \Br X$ be a subset of the Brauer group. We define the \defi{Brauer-Manin set associated to $B$ for 0-cycles of degree $\delta$}, denoted by $Z^{\delta}_0(X_{\A_k})^{B}$, to be the left kernel of the Brauer-Manin pairing above.
\end{defn}

\begin{defn} \label{defn:BrHP} Let $\{X_\omega\}_\omega$ be a family of smooth, geometrically integral varieties over $k$. For each $X_\omega$, let $B_\omega \subset \Br X_\omega$ be a subset of the Brauer group. We say that \defi{the $\{B_\omega\}_\omega$-obstruction to the Hasse principle for 0-cycles of degree $\delta$ is the only one for the family $\{X_\omega\}_\omega$} if $Z^{\delta}_0(X_{\omega, \A_k})^{B_\omega} \neq \emptyset$ implies $Z_0^\delta(X_\omega) \neq \emptyset$, for all $X_\omega$. 
\end{defn}

The following definition is taken from \cite{Liang} and slightly differs from the definition given in e.g. \cite{CTSD94}.
\begin{defn}[({\cite{Liang}})] \label{defn:BrWA} Let $\{X_\omega\}_\omega$ be a family of smooth, geometrically integral varieties over $k$. For each $X_\omega$, let $B_\omega \subset \Br X_\omega$ be a subset of the Brauer group.  We say that \defi{the $\{B_\omega\}_\omega$-obstruction to weak approximation for 0-cycles of degree $\delta$ is the only one}  if for any $n \in \Z_{>0}$, for any finite subset $S \subset \Omega_k$, and for any $(z_v)_{v \in \Omega_k} \in Z_0^\delta(X_{\omega, \A_k})^{B_\omega}$, there exists some $z_{n,S} \in Z_0^\delta(X_\omega)$ such that $z_{n,S}$ and $z_v$ have the same image in $\CH_0(X_{\omega, k_v})/n$ for all $v \in S$, for all $X_\omega$, where $\CH_0$ denotes the usual Chow group of 0-cycles.  
\end{defn}

\begin{remark} In Definitions \ref{defn:BrHP} and  \ref{defn:BrWA}, when for all $X_\omega$ we take e.g. $B_\omega := \Br X_\omega$ or $B_\omega := \Br X_\omega \{d\}$ for some $d \in \Z_{>0}$, we say, respectively, that the Brauer-Manin obstruction or the $d$-primary Brauer-Manin obstruction is the only one for $0$-cycles of degree $\delta$ for the family $\{ X_\omega\}_\omega$.
\end{remark}

The main conjecture governing the arithmetic properties of 0-cycles of degree 1 is the following:

\begin{conj}[{Colliot-Th\'el\`ene}] Let $\{X_\omega\}_\omega$ be the family of \emph{all} smooth, proper, geometrically integral varieties over $k$. Then the Brauer-Manin obstruction to weak approximation for 0-cycles of degree $1$ is the only one for  $\{X_\omega\}_\omega$.
\end{conj}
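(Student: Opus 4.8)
The statement displayed is the Colliot-Th\'el\`ene conjecture, one of the central open problems in the arithmetic of $0$-cycles; it is not proved in the present paper, where it appears only as motivation, and no unconditional proof is known. What follows is therefore an honest account of how one would \emph{attempt} the conjecture and of why it resists a general argument, rather than a proof I could actually complete.

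The guiding philosophy, which is exactly what makes $0$-cycles more tractable than rational points and which underlies Liang's method used throughout this paper, is that $0$-cycles form a group and admit restriction and corestriction maps along finite extensions. Concretely, the conjecture can be packaged as the exactness of the complex
\[ \varprojlim_n \CH_0(X)/n \;\longrightarrow\; \prod_{v\in\Omega_k} \varprojlim_n \CH_0(X_{k_v})/n \;\longrightarrow\; \Hom(\Br X,\Q/\Z), \]
and the plan would be to establish this exactness by the \emph{fibration method}: choose a dominant morphism $f\colon X\to \PP^1_k$ and attempt to glue $0$-cycles supported in the fibres into a global $0$-cycle whose obstruction is governed by $\Br X$. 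The first step is to understand the smooth fibres $X_\theta$ over closed points $\theta$ of $\PP^1$, for which one needs the analogous statement fibrewise, together with control of how $\Br X$ restricts to the fibres; the group structure on $0$-cycles is precisely what permits the delicate patching across fibres lying over extensions of $k$ of varying degree, in the same spirit as the restriction--corestriction arguments of Section~\ref{sec:ControlAbVar}.

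The decisive reduction in this circle of ideas is due to Harpaz and Wittenberg, who proved that, for fibrations over $\PP^1$, the $0$-cycle conjecture follows from a statement about \emph{rational points} in the fibres (their Conjecture~(E)), itself a consequence of the Brauer--Manin obstruction being the only obstruction for rational points together with the finiteness of the relevant Tate--Shafarevich groups. Thus, for a variety admitting a nice fibration into pieces whose rational points are understood, the conjecture can be approached conditionally, and this is the most realistic route to the displayed statement for any given structured $X$.

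The fundamental obstacle is that the conjecture is asserted for \emph{all} smooth, proper, geometrically integral varieties, whereas every known method exploits some special geometric structure: rational connectedness, a fibration into understood pieces, or an abelian or Kummer structure of the type studied here. For a variety with no such structure there is no handle on $\Br X$ nor on the fibres of a map to $\PP^1$, and even the analogue for rational points is wide open; moreover every known case is ultimately conditional on the finiteness of Tate--Shafarevich groups, which remains unproved. I therefore expect no unconditional argument to be available, and the honest conclusion is that the displayed conjecture is a guiding open problem which this paper addresses only in the conditional, Kummer-specific form of Theorems~\ref{cycKumNU} and~\ref{cycKum}.
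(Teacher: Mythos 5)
You are right: this statement is Colliot-Th\'el\`ene's open conjecture, which the paper states purely as motivation and does not prove (nor claim to prove), so there is no proof in the paper to compare against. Your refusal to manufacture an argument is the correct response, and your sketch of the state of the art---the fibration method over $\PP^1_k$ and the Harpaz--Wittenberg reduction of the $0$-cycle statement to a rational-points statement in the fibres, with all known cases conditional or structure-dependent---is an accurate account of why the conjecture remains open.
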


The main theorems of this section are the  following results analogous to \cite[Thm 3.2.1]{Liang}, relating the Brauer-Manin obstruction for rational points to that for 0-cycles.  They are essentially two versions of the same result $-$ the first version requires explicit knowledge of $X$, while the second one works uniformly for all $X \in \scrK_{k,g}$.

\begin{thm}  \label{cycKumNU}  Fix $\delta \in \Z$ and let $d \in \Z_{>0}$ be coprime to $\delta$. Let $X \in \scrK_{k,g}$. Let $F/k$ be a finite extension such that $\Gamma_F$ acts trivially on $\Br\overline{X}[d]$. Fix a finite Galois extension $k'/k$ such that $\Pic X_{k'}=\Pic \Xbar $. Suppose that, for all 
finite extensions $l/k$ of degree coprime to $d$ which are linearly disjoint from $Fk'/k$, the $d$-primary Brauer-Manin obstruction to the Hasse principle (respectively, to weak approximation) is the only one for rational points on $X_l$.
 Then the $d$-primary Brauer-Manin obstruction to the Hasse principle (respectively, to weak approximation) is the only one for 0-cycles of degree $\delta$ on $X$. 
\end{thm}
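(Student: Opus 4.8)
The plan is to adapt Liang's method from \cite[Thm 3.2.1]{Liang}, whose key idea is to pass between 0-cycles over $k$ and rational points over varying finite extensions $l/k$. I would treat the Hasse principle and weak approximation statements in parallel, since the structure is identical. Suppose we are given a family $(z_v)_v \in Z_0^\delta(X_{\A_k})^{\Br X\{d\}}$ of local 0-cycles of degree $\delta$, orthogonal to the $d$-primary Brauer group; the goal is to produce a global 0-cycle of degree $\delta$ (respectively, one approximating the $z_v$ in the sense of Definition~\ref{defn:BrWA}). Since $\gcd(d,\delta)=1$, a standard manoeuvre (writing $\delta = a\delta' $ where $\delta'$ is built from moving the local cycles, and combining with a suitable auxiliary cycle of degree coprime to $d$) reduces the problem to finding, for a carefully chosen finite extension $l/k$, an $l$-rational point on $X_l$ that is orthogonal to $\Br X_l\{d\}$ locally, which by hypothesis lifts to an honest $l$-rational point.

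The crucial step, and the place where the work of Sections~\ref{sec:ControlAbVar} and \ref{sec:ControlKummer} enters, is the choice of the field $l$. I would choose $l/k$ so that: (i) the local components of $(z_v)_v$ are realised (up to the usual Chow-group manipulations) by closed points whose residue fields embed into completions of $l$; (ii) $[l:k]$ is coprime to $d$; and (iii) $l/k$ is linearly disjoint from $Fk'/k$. Condition (iii) can be arranged because $Fk'/k$ is a \emph{fixed} finite extension depending only on $X$, so one has infinitely many primes (equivalently, enough freedom in the effective construction of $l$ via a Chebotarev-type or approximation argument) whose Frobenius data forces linear disjointness while still matching the prescribed local behaviour. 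With such an $l$ in hand, Theorem~\ref{thmisoNU} supplies a canonical isomorphism $\Res_{l/k}\colon (\Br X/\Br_0 X)\{d\} \xrightarrow{\sim} (\Br X_l/\Br_0 X_l)\{d\}$. This compatibility is exactly what is needed to transport the condition ``$(z_v)_v$ is orthogonal to $\Br X\{d\}$'' into the condition ``the corresponding adelic point of $X_l$ is orthogonal to $\Br X_l\{d\}$'': corestriction and the degree-$\delta$ pairing interact with $\Res_{l/k}$ in the expected way, and the coprimality $\gcd(d,\delta)=1$ together with $d \nmid [l:k]$ ensures no $d$-primary information is lost when summing over places of $l$ above those of $k$.

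I expect the main obstacle to be the bookkeeping that guarantees the existence of a single field $l$ meeting all three requirements \emph{simultaneously}, while also correctly accounting for the contribution of the local cycles to the global Brauer--Manin pairing. Concretely, one must show that the $d$-primary orthogonality of the global family of local 0-cycles, evaluated against $\Br X\{d\}$, matches the orthogonality of the induced $l$-adelic point against $\Br X_l\{d\}$ under $\Res_{l/k}$; this requires the functoriality of the evaluation pairing under corestriction (as in the formula defining $\langle\,\cdot\,,\,\cdot\,\rangle_{\textrm{BM}}$ for 0-cycles) together with the isomorphism from Theorem~\ref{thmisoNU}. Once the local data has been organised so that it comes from $l$-points and the Brauer classes match up, the hypothesis (sufficiency of the $d$-primary obstruction for $l$-rational points) is applied to obtain an $l$-point, which pushes forward to a 0-cycle on $X$ of the desired degree.

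For the weak approximation variant, I would run the same construction but keep track of the finite set $S$ of places and the integer $n$: the approximation of $z_v$ in $\CH_0(X_{k_v})/n$ is achieved by approximating the corresponding $l$-point in the local fields above $S$, using that the $l$-rational point produced by the hypothesis can be taken close to the prescribed local points. The passage from an approximating $l$-point back to an approximating 0-cycle over $k$ follows Liang's argument verbatim, using the corestriction (push-forward) of closed points and the fact that $\gcd(d,\delta)=1$ lets us adjust the degree without disturbing the $d$-primary data. The only genuinely new input beyond Liang's framework is the control of the transcendental Brauer group encapsulated in Theorem~\ref{thmisoNU}, which replaces the triviality of the geometric Brauer group that was available in the rationally connected setting.
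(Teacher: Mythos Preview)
Your approach is essentially the paper's: adapt Liang's method, with Theorem~\ref{thmisoNU} supplying the Brauer-group control that replaces the vanishing of $\Br\Xbar$ available in the rationally connected case. The one place where your sketch is vague and the paper is concrete is the construction of $l$: rather than an unspecified ``Chebotarev-type or approximation argument'', the paper (following Liang exactly) passes to the fibration $X\times\PP^1_k\to\PP^1_k$ and takes $l=k(\theta)$ for a closed point $\theta\in\PP^1_k$ produced by \cite[Prop.~3.3.3]{Liang}, which is precisely the device that simultaneously forces $[l:k]=\Delta$ with $\gcd(\Delta,d)=1$, linear disjointness from $Fk'/k$, and the correct local behaviour relative to the moved cycles; the finiteness of $(\Br X/\Br_0 X)\{d\}$ needed to reduce to finitely many Brauer classes comes from \cite[Cor.~2.8]{SZ-KumVar}.
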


\begin{proof} We prove the theorem for weak approximation; for the Hasse principle, the argument is similar. Our proof is closely based on that of \cite[Thm 3.2.1]{Liang}. For this reason, we give just a sketch of the proof; the details can be filled in by referring to Liang's proof.

\begin{enumerate}
\item By considering the projection $\pr: X \times \PP^1_k \to X$ with section $s: x \mapsto (x,u_0)$, where $u_0 \in \PP^1_k(k)$ is some fixed rational point,  it suffices to prove the following statement:

 {\it The $d$-primary Brauer-Manin obstruction is the only one for weak approximation for 0-cycles of degree $\delta$ on $X \times \PP^1_k$. }

    Let $(z_v)_v \in Z_0^\delta((X\times \PP^1_k)_{\A_k})$ be orthogonal to $\Br(X \times \PP^1_k) \{d\}$. Let $\{ b_i\}_{i=1}^s$ be a complete set of representatives for $  (\Br (X\times \PP^1_k)/\Br_0(X\times \PP^1_k))\{ d\}$, which is finite by \cite[Cor. 2.8]{SZ-KumVar}.  Let $m:=  [k':k] \cdot \# (\Br X/ \Br_0 X) \{d\}$.   We fix $n \in \Z_{>0}$ and a finite subset $S \subset \Omega_k$.  We also fix a closed point $P:=(x_0, u_0) \in X \times \PP_k^1$ and let $\delta_P :=[k(P):k]$.

\item Let $S_0 \subset \Omega_k$ be a finite subset containing $S$ and all the archimedean places of $k$. By enlarging $S_0$ if necessary, we can assume that, for any $v \not\in S_0$, 
\begin{itemize}
\item $\langle z'_v, b_i \rangle_{\textrm{BM}} = 0$ for any $i \in \{ 1, ..., s\}$ and for any $z'_v \in Z_0(X_v \times \PP^1_v)$, and
\item $X_v$ has a $k_v$-point $x_v$ (this follows from the Lang-Weil estimates and Hensel's lifting).
\end{itemize}
\item For each $v \in S_0$, we write $z_v = z_v^+- z_v^-$, where $z_v^+$ and $z_v^-$ are effective 0-cycles with disjoint supports, and we let 
\(z_v^1:= z_v +  mn d \delta_P z_v^-.\)
Note that
\(\deg(z_v^1) = \delta +  m n d \delta_P \deg(z_v^-).\) By adding to each $z_v^1$ a suitable multiple of the 0-cycle $m n d  P_v$, where $P_v:=P \otimes_k k_v$, we obtain 0-cycles $z_v^2$ of the same degree $\Delta > 0$ for all $v \in S_0$. By construction, $\Delta \equiv \delta  \mod mnd\delta_P$. In particular, since $\gcd(d, \delta) = 1$, we have  $\gcd(\Delta, d) = 1$. Using the natural projection $\pi: X \times \PP_k^1 \to \PP_k^1$ and a moving lemma by Liang (\cite[Lemma 1.3.1]{Liang}), for each $v \in S_0$ we can find an effective 0-cycle $z_v^3$ of degree $\Delta$ such that $\pi_\ast(z_v^3)$ is separable and $z_v^3$ is sufficiently close to $z_v^2$ (and hence to $z_v^1$ and to $z_v$).  

\item  By \cite[Prop. 3.3.3]{Liang} and the discussion in the proof of  \cite[Prop. 3.4.1]{Liang}, we can find a closed point $\theta \in \PP^1_k$ sufficiently close, for $v \in S_0$, to $\pi_\ast(z_v^3)$  such that $[k(\theta):k] = \Delta$ and  $k(\theta)/k$ is linearly disjoint from $Fk'/k$, and we also obtain an adelic point $(\mathcal{M}_w)_w \in X(\A_{k(\theta)})^{\Br X \{d\}}$.

\item Since $\gcd(\Delta,d)=1$ and $k(\theta)/k$ is linearly disjoint from $Fk'/k$, Theorem~\ref{thmisoNU} shows that $(\Br X/\Br_0 X) \{d\} = (\Br X_{k(\theta)}/\Br_0 X_{k(\theta)}) \{d\}$. It follows that $(\mathcal{M}_w)_w \in X(\A_{k(\theta)})^{\Br X \{d\}}  =  X(\A_{k(\theta)})^{\Br X_{k(\theta)} \{d\}}$.  
\item By hypothesis (and by the construction of $k(\theta)/k$) there exists a global $k(\theta)$-point $\mathcal{M}$ on $X_{k(\theta)}$. By construction, $\mathcal{M}$ and $z_v$ have the same image in $\CH_0(X_v)/n$ for all $v \in S$.  
\item When viewed as a 0-cycle on $X$, $\mathcal{M}$ has degree $\Delta$. Since  $\Delta \equiv \delta \mod mnd\delta_P$, adding a suitable multiple of the degree $\delta_P$ closed point $x_0 = \pr(P)$ to $\mathcal{M}$  yields a global 0-cycle of degree $\delta$ on $X$ with the same image as $z_v$ in $\CH_0(X_{k_v})/n$ for all $v \in S$, as required. \qedhere
\end{enumerate}
\end{proof}

\begin{thm}  \label{cycKum}   Fix $d \in \Z_{>0}$ and $g \in  \Z_{>1}$.  Fix $\delta \in \Z$ coprime to $\# \GL_{2^{2g} + g^2}(\F_3)\cdot N_{d,g(2g-1)-1}$. Then for all $X \in \scrK_{k,g}$, the following holds. If for all finite extensions $l/k$ with degree coprime to $\# \GL_{2^{2g} + g^2}(\F_3)\cdot N_{d,g(2g-1)-1} $  the $d$-primary Brauer-Manin obstruction to the Hasse principle (respectively, to weak approximation) is the only one  for rational points on $X_l$,  then the $d$-primary Brauer-Manin obstruction to the Hasse principle (respectively, to weak approximation) is the only one  for 0-cycles of degree $\delta$ on $X$.
\end{thm}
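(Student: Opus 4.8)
The plan is to derive Theorem~\ref{cycKum} as a uniform corollary of Theorem~\ref{cycKumNU}, exactly as the uniform Brauer-group statements (Theorem~\ref{thmiso}, Corollary~\ref{cor:kum-ext3}) were obtained from their $X$-specific counterparts. The key observation is that the numerical coprimality hypothesis on $[l:k]$ and on $\delta$ with respect to the absolute constant $C:=\# \GL_{2^{2g} + g^2}(\F_3)\cdot N_{d,g(2g-1)-1}$ is strong enough to replace the two geometric linear-disjointness conditions (``linearly disjoint from $Fk'/k$'') that appear in the non-uniform Theorem~\ref{cycKumNU}. Since $C$ is a fixed integer depending only on $d$ and $g$ (and not on the individual variety $X$), it governs the whole family $\scrK_{k,g}$ at once.

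First I would fix an arbitrary $X \in \scrK_{k,g}$ and choose, as in Theorem~\ref{cycKumNU}, a finite extension $F/k$ with $\Gamma_F$ acting trivially on $\Br\overline{X}[d]$ and a finite Galois extension $k'/k$ with $\Pic X_{k'}=\Pic\Xbar$. The point is to verify that whenever $[l:k]$ is coprime to $C$, the extension $l/k$ automatically satisfies the disjointness hypotheses required to run Theorem~\ref{cycKumNU}. Coprimality to $C$ certainly forces $\gcd([l:k],d)=1$ since $d \mid N_{d,g(2g-1)-1}$. For the linear disjointness from $Fk'/k$, I would argue that $[Fk':k]$ divides $C$: by Remark~\ref{rem:kumfreerank} together with the argument of Lemma~\ref{lemBr0U}, the field $k'$ can be taken so that $[k':k] \mid \# \GL_{2^{2g} + g^2}(\F_3)$, and by Lemma~\ref{lem:lgood} (via Lemma~\ref{lem:same action}) the action on $\Br\overline{X}[d]$ factors through a group whose order divides a power of $N_{d,g(2g-1)-1}$, so one may take $[F:k]$ dividing $N_{d,g(2g-1)-1}$. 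Hence $[Fk':k]$ divides $C$, and since $\gcd([l:k],C)=1$ we get $\gcd([l:k],[Fk':k])=1$, which yields linear disjointness of $l/k$ from $Fk'/k$.

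With these two verifications in hand, the hypothesis of Theorem~\ref{cycKum} (the $d$-primary Brauer-Manin obstruction is the only one for rational points on $X_l$ for all $l/k$ of degree coprime to $C$) implies the hypothesis of Theorem~\ref{cycKumNU} for this $X$: every $l/k$ that is linearly disjoint from $Fk'/k$ and of degree coprime to $d$ that one needs in the proof of Theorem~\ref{cycKumNU} can be arranged to have degree coprime to $C$ (indeed the closed point $\theta$ produced in step~(4) there has $[k(\theta):k]=\Delta$ coprime to $d$, and by enlarging the coprimality demand to $C$ throughout one keeps $\Delta$ coprime to $C$). Likewise $\gcd(\delta, C)=1$ guarantees $\gcd(\delta,d)=1$, so $\delta$ is admissible for Theorem~\ref{cycKumNU}. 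Applying Theorem~\ref{cycKumNU} then gives that the $d$-primary Brauer-Manin obstruction to the Hasse principle (respectively weak approximation) is the only one for $0$-cycles of degree $\delta$ on $X$, and since $X \in \scrK_{k,g}$ was arbitrary the uniform statement follows.

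The main obstacle I anticipate is bookkeeping rather than conceptual: one must confirm that the single constant $C$ simultaneously dominates both the algebraic obstruction (controlled by $\# \GL_{2^{2g}+g^2}(\F_3)$ through $\Pic\Xbar$) and the transcendental obstruction (controlled by $N_{d,g(2g-1)-1}$ through $\Aut(\Br\overline{X}[d^n])$), uniformly over all $X\in\scrK_{k,g}$, and that the degree $\Delta$ of the auxiliary closed point $\theta$ constructed inside the proof of Theorem~\ref{cycKumNU} can be taken coprime to all of $C$ and not merely to $d$. This last subtlety is exactly why $\delta$ is required to be coprime to $C$ in the statement: it ensures $\Delta \equiv \delta \pmod{mnd\delta_P}$ remains coprime to $C$, so that the uniform Brauer-group isomorphism of Theorem~\ref{thmiso} (rather than the $X$-specific Theorem~\ref{thmisoNU}) can be invoked at the analogue of step~(5). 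Once this numerical compatibility is checked, the proof is a direct reduction and requires no new geometric input beyond the results of Sections~\ref{sec:ControlAbVar} and~\ref{sec:ControlKummer}.
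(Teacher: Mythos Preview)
Your underlying strategy is the paper's: re-run Liang's argument, arrange the auxiliary degree $\Delta=[k(\theta):k]$ to be coprime to the absolute constant $C=\# \GL_{2^{2g}+g^2}(\F_3)\cdot N_{d,g(2g-1)-1}$, and invoke the uniform Theorem~\ref{thmiso} at step~(5). However, your framing as a black-box reduction to Theorem~\ref{cycKumNU} does not go through. The hypothesis of Theorem~\ref{cycKumNU} asks that the $d$-primary obstruction be the only one for \emph{every} $l/k$ of degree coprime to $d$ that is linearly disjoint from $Fk'/k$; but there may well be such $l$ whose degree is not coprime to $C$ (take, for instance, an $X$ for which $F$ and $k'$ are both $k$, so linear disjointness is vacuous, and let $[l:k]$ be a prime dividing $C$ but not $d$). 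Hence the hypothesis of Theorem~\ref{cycKum} does not imply that of Theorem~\ref{cycKumNU}, and the sentence ``Applying Theorem~\ref{cycKumNU} then gives \ldots'' is not justified. You tacitly acknowledge this when you say the extensions ``that one needs in the proof'' can be arranged coprime to $C$ --- but then you are modifying the proof of Theorem~\ref{cycKumNU}, not applying the theorem, and at that point the detour through $F$, $k'$, and linear disjointness is superfluous. The paper does exactly this without the detour: it re-runs Liang's proof directly, inserts $\#\GL_{2^{2g}+g^2}(\F_3)$ into $m$ and $N_{d,g(2g-1)-1}$ into the modulus governing $\Delta$, and at step~(5) applies Theorem~\ref{thmiso} (which needs only $\gcd(\Delta,C)=1$) rather than Theorem~\ref{thmisoNU}. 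A minor slip: $[F:k]$ divides a \emph{power} of $N_{d,g(2g-1)-1}$, not $N_{d,g(2g-1)-1}$ itself, though this does not affect the coprimality conclusion.
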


\begin{proof} Again, our proof follows closely that of \cite[Thm 3.2.1]{Liang}, with some minor modifications. For this reason, we  just  sketch  the proof for weak approximation (for the Hasse principle, the argument is similar); the details can be filled in by referring to \cite{Liang}.

\begin{enumerate}
\item By considering the projection $\pr : X \times \PP^1_k \to X$ with section $s: x \mapsto (x,u_0)$, where $u_0 \in \PP^1_k(k)$ is some fixed rational point, it suffices to prove the following statement:

 {\it The $d$-primary Brauer-Manin obstruction is the only one for weak approximation for 0-cycles of degree $\delta$ on $X \times \PP^1_k$. }
  
    Let $(z_v)_v \in Z_0^\delta((X\times \PP^1_k)_{\A_k})$ be orthogonal to $\Br(X \times \PP^1_k) \{d\}$. Let $\{ b_i\}_{i=1}^s$ be a complete set of representatives for $  (\Br (X\times \PP^1_k)/\Br_0(X\times \PP^1_k))\{ d\}$, which is finite by \cite[Cor. 2.8]{SZ-KumVar}.  Let $m:=   \# \GL_{2^{2g} + g^2}(\F_3)  \cdot \# (\Br X/ \Br_0 X) \{d\}$.  We fix $n \in \Z_{>0}$ and a finite subset $S \subset \Omega_k$.  We also fix a closed point $P:=(x_0, u_0) \in X \times \PP_k^1$ and let $\delta_P :=[k(P):k]$.

\item Let $S_0 \subset \Omega_k$ be a finite subset containing $S$ and all the archimedean places of $k$. By enlarging $S_0$ if necessary, we can assume that, for any $v \not\in S_0$, 
\begin{itemize}
\item $\langle z'_v, b_i \rangle_{\textrm{BM}} = 0$ for any $i \in \{ 1, ..., s\}$ and for any $z'_v \in Z_0(X_v \times \PP^1_v)$, and
\item $X_v$ has a $k_v$-point $x_v$ (this follows from the Lang-Weil estimates and Hensel's lifting).
\end{itemize}

\item For each $v \in S_0$, we write $z_v = z_v^+- z_v^-$, where $z_v^+$ and $z_v^-$ are effective 0-cycles with disjoint supports, and we let 
\(z_v^1:= z_v +  mn d \delta_P z_v^-.\)
Note that
\(\deg(z_v^1) = \delta +  m n d \delta_P \deg(z_v^-).\) By adding to each $z_v^1$ a suitable multiple of the 0-cycle $m n   P_v$, where $P_v:=P \otimes_k k_v$, we obtain 0-cycles $z_v^2$ of the same degree $\Delta > 0$ for all $v \in S_0$, where we can take $\Delta$ to satisfy $\Delta \equiv \delta  \mod mn \delta_P  N_{d,g(2g-1)-1}$. In particular, since $\delta$ is coprime to $\# \GL_{2^{2g} + g^2}(\F_3)\cdot N_{d,g(2g-1)-1}$, we deduce that $\Delta$ is coprime to $\# \GL_{2^{2g} + g^2}(\F_3) \cdot  N_{d,g(2g-1)-1}$. Using the natural projection $\pi: X \times \PP_k^1 \to \PP_k^1$ and a moving lemma by Liang (\cite[Lemma 1.3.1]{Liang}), for each $v \in S_0$ we can find an effective 0-cycle $z_v^3$ of degree $\Delta$ such that $\pi_\ast(z_v^3)$ is separable and $z_v^3$ is sufficiently close to $z_v^2$ (and hence to $z_v^1$  and to $z_v$).

\item Arguing as in \cite[Proof of Thm 3.2.1]{Liang}, we obtain a  field extension $k(\theta)/k$ such that $[k(\theta):k] = \Delta$ and an adelic point $(\mathcal{M}_w)_w \in X(\A_{k(\theta)})^{\Br X \{d\}}$.
\item By Theorem \ref{thmiso}, we have $(\Br X/\Br_0 X) \{d\} = (\Br X_{k(\theta)}/\Br_0 X_{k(\theta)}) \{d\}$. Hence,  $(\mathcal{M}_w)_w \in X(\A_{k(\theta)})^{\Br X \{d\}}  =  X(\A_{k(\theta)})^{\Br X_{k(\theta)} \{d\}}$.  
\item By hypothesis (and by the construction of $k(\theta)/k$) there exists a global $k(\theta)$-point $\mathcal{M}$ on $X_{k(\theta)}$. By construction, $\mathcal{M}$ and $z_v$ have the same image in $\CH_0(X_v)/n$ for all $v \in S$.  
\item When viewed as a 0-cycle on $X$, $\mathcal{M}$ has degree $\Delta$. Since  $\Delta \equiv \delta \mod mnd\delta_P$, adding a suitable multiple of the degree $\delta_P$ closed point $x_0 = \pr(P)$ to $\mathcal{M}$  yields a global 0-cycle of degree $\delta$ on $X$ with the same image as $z_v$ in $\CH_0(X_{k_v})/n$ for all $v \in S$, as required. \qedhere
\end{enumerate}
\end{proof}

\begin{remark} \label{rem2} Let $X$ be a Kummer variety over $k$.  By \cite[Thm 1.7]{CreutzViray-DegBMO}, it follows that $X_l(\A_l)^{\Br X_l} \neq \emptyset$ if and only if $X_l(\A_l)^{\Br X_l\{2\}} \neq \emptyset$, for any finite extension $l/k$.  Hence, for any such $X$,  saying  ``the 2-primary Brauer-Manin obstruction to the Hasse principle is the only one for rational points on $X_l$'' is equivalent to saying ``the Brauer-Manin obstruction to the Hasse principle is the only one for rational points on $X_l$''. 
\end{remark}

\begin{cor}\label{cor:Kummer} 
Let $X$ be a Kummer variety over $k$ and let $\delta$ be an odd integer. Suppose that the Brauer-Manin obstruction to the Hasse principle is the only one for rational points on $X_l$ for all finite extensions $l/k$.
 Then the $2$-primary Brauer-Manin obstruction to the Hasse principle is the only one for $0$-cycles of degree $\delta$ on $X$. 
\end{cor}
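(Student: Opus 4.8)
The plan is to deduce Corollary~\ref{cor:Kummer} directly from the uniform theorem Theorem~\ref{cycKum} specialized to the case $d=2$, combined with Remark~\ref{rem2}, which allows us to trade ``Brauer-Manin'' for ``$2$-primary Brauer-Manin'' at the level of rational points over every finite extension. First I would fix $d=2$ and set $g=\dim X$, noting that since $X$ is a Kummer variety it lies in $\scrK_{k,g}$; the hypothesis that $\delta$ is odd must be matched to the coprimality condition $\gcd(\delta,\#\GL_{2^{2g}+g^2}(\F_3)\cdot N_{d,g(2g-1)-1})=1$ demanded by Theorem~\ref{cycKum}, so the first technical point is to arrange this coprimality. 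This is where a naive application stalls, because an odd $\delta$ need not be coprime to the odd part of $\#\GL_{2^{2g}+g^2}(\F_3)\cdot N_{2,g(2g-1)-1}$.

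To get around this, I would \emph{not} invoke the uniform Theorem~\ref{cycKum} but rather the non-uniform Theorem~\ref{cycKumNU} with $d=2$, whose only arithmetic constraint on $\delta$ is $\gcd(2,\delta)=1$, i.e. exactly that $\delta$ is odd. Concretely, I would choose a finite extension $F/k$ such that $\Gamma_F$ acts trivially on $\Br\overline{X}[2]$ (such $F$ exists because $\Br\overline{X}[2]$ is finite and the Galois action factors through a finite quotient), and a finite Galois extension $k'/k$ with $\Pic X_{k'}=\Pic\Xbar$ (such $k'$ exists by Remark~\ref{rem:kumfreerank}, as $\Pic\Xbar$ is finitely generated and free). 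With these fixed, Theorem~\ref{cycKumNU} reduces the corollary to verifying its hypothesis: for every finite extension $l/k$ of odd degree that is linearly disjoint from $Fk'/k$, the $2$-primary Brauer-Manin obstruction to the Hasse principle is the only one for rational points on $X_l$.

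The key step is then to supply this hypothesis from the corollary's assumption. By Remark~\ref{rem2}, applied to the Kummer variety $X$ and the finite extension $l/k$, the statement ``the $2$-primary Brauer-Manin obstruction to the Hasse principle is the only one for rational points on $X_l$'' is \emph{equivalent} to ``the Brauer-Manin obstruction to the Hasse principle is the only one for rational points on $X_l$''. The latter holds for \emph{all} finite extensions $l/k$ by hypothesis, in particular for the odd-degree ones linearly disjoint from $Fk'/k$. Hence the hypothesis of Theorem~\ref{cycKumNU} is satisfied, and its conclusion is precisely that the $2$-primary Brauer-Manin obstruction to the Hasse principle is the only one for $0$-cycles of degree $\delta$ on $X$.

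I expect the main obstacle to be purely bookkeeping rather than conceptual: one must be careful that Remark~\ref{rem2} is being applied over the \emph{varying} field $l$ (it is stated for arbitrary finite $l/k$, so this is legitimate), and that $X_l$ remains a Kummer variety over $l$ so that \cite[Thm 1.7]{CreutzViray-DegBMO} applies over $l$. Once that is checked, the proof is a one-line invocation of Theorem~\ref{cycKumNU} with $d=2$.
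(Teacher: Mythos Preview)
Your proposal is correct and follows essentially the same approach as the paper: the paper's proof is a one-line invocation of \cite[Thm~1.7]{CreutzViray-DegBMO} (the content of Remark~\ref{rem2}) together with Theorem~\ref{cycKumNU} applied with $d=2$. Your initial detour through the uniform Theorem~\ref{cycKum} and your explicit verification that $X_l$ is again a Kummer variety are not needed, but the core argument matches the paper's.
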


\begin{proof}
This is an immediate consequence of \cite[Thm 1.7]{CreutzViray-DegBMO} and the proof of Theorem~\ref{cycKumNU} applied with $d=2$. 
\end{proof}

\begin{conj} [Skorobogatov] \label{conjsko} The Brauer-Manin obstruction is the only obstruction to the Hasse
principle and weak approximation on K3 surfaces over number fields.
\end{conj}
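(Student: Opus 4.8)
This is Skorobogatov's conjecture, and I would be candid at the outset: it is one of the central open problems in the arithmetic of K3 surfaces, and a full proof lies far beyond what present techniques can deliver. So rather than propose a proof, let me describe the strategy by which the known special cases are attacked and pinpoint exactly where the difficulty lies.

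The only situations in which one can say anything are those where the K3 surface $X$ carries enough extra geometric structure to reduce its arithmetic to objects we understand. The flagship example is precisely the Kummer case underlying this paper: when $\Xbar$ carries the Kummer lattice, $X$ becomes (after a bounded base extension) of the form $\Kum V$ for a torsor $V$ under an abelian variety $A$, and one has $\Br\overline{X}=\Br\overline{A}$, as exploited in the proof of Lemma~\ref{lem:ppower}. The plan in this case is to transport the existence-of-rational-points question on $X$ to a descent problem for torsors under $A$ and under $A[2]$, where the Mordell-Weil theorem, isogenies, and class-field-theoretic control of Selmer and Tate-Shafarevich groups give genuine leverage; such arguments typically succeed only conditionally on finiteness of the relevant Tate-Shafarevich groups. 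A second family of cases would exploit an elliptic or genus-one fibration $X\to\PP^1$: here one runs a fibration method, verifying the conjecture on the fibres and then assembling a global solution, which requires controlling the Brauer-Manin obstruction along the fibration together with arithmetic-duality inputs in the spirit of Harari's formal lemma.

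The hard part --- and the reason the conjecture is open --- is that a general K3 surface has none of this structure. It need not be Kummer, need not be dominated by an abelian variety, and need not admit a fibration by curves of small genus; for such $X$ there is at present no mechanism whatsoever to produce even a single rational point from the mere nonemptiness of $X(\A_k)^{\Br X}$. A genuine attack would therefore require a fundamentally new idea for manufacturing rational points out of adelic ones in the absence of auxiliary geometry --- for instance a sufficiently strong form of the section conjecture for K3 surfaces, or an unconditional handle on the transcendental Brauer group (in the direction of the finiteness results of Orr and Skorobogatov) combined with a descent formalism valid for arbitrary K3 surfaces. Accordingly I would not attempt a direct proof; the realistic programme is incremental, steadily enlarging the class of K3 surfaces possessing usable structure --- Kummer, singular, elliptically fibred, or of large Picard rank --- while the general statement awaits a conceptual breakthrough.
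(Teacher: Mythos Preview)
Your assessment is correct: this statement is labeled in the paper as a \emph{conjecture}, not a theorem, and the paper makes no attempt to prove it. It is simply recorded and then used as a hypothesis in Corollary~\ref{corsimple}. There is nothing to compare here --- you have rightly identified that no proof exists, and your discussion of the known partial approaches (Kummer structure, elliptic fibrations) and the obstacles in the general case is an accurate summary of the state of the art.
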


By taking into account Remark \ref{rem2} and Conjecture  \ref{conjsko}, and by running the proof for  $d=2$, we obtain the following corollary to Theorem \ref{cycKumNU}.

\begin{cor} \label{corsimple}Let $X$ be a Kummer surface over $k$ and let $\delta$ be an odd integer. Then, conditionally on Conjecture \ref{conjsko}, the 2-primary Brauer-Manin obstruction to the Hasse principle is the only one  for 0-cycles of degree $\delta$ on $X$.  
\end{cor}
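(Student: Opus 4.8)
The plan is to deduce this directly from Theorem~\ref{cycKumNU} applied with $d=2$, feeding in Conjecture~\ref{conjsko} together with Remark~\ref{rem2} to verify the hypothesis. Since $\delta$ is odd we have $\gcd(2,\delta)=1$, so the coprimality requirement of Theorem~\ref{cycKumNU} is satisfied. A Kummer surface over $k$ lies in $\scrK_{k,2}$, so I may fix auxiliary extensions $F/k$ (with $\Gamma_F$ acting trivially on $\Br\Xbar[2]$) and $k'/k$ (a finite Galois extension with $\Pic X_{k'}=\Pic\Xbar$) as in the statement of that theorem.

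The substance of the argument is to check that, for every finite extension $l/k$ of degree coprime to $2$ which is linearly disjoint from $Fk'/k$, the $2$-primary Brauer-Manin obstruction to the Hasse principle is the only one for rational points on $X_l$. First I would observe that $X$ is simultaneously a K3 surface and a Kummer variety of dimension $2$, and that both properties are preserved under base change; hence $X_l$ is a K3 surface over the number field $l$ that is also a Kummer variety over $l$. Applying Conjecture~\ref{conjsko} to the K3 surface $X_l$ gives that the full Brauer-Manin obstruction to the Hasse principle is the only one for rational points on $X_l$. Note that the conjecture supplies this over \emph{all} finite extensions of $k$, so in particular over the restricted class of extensions $l$ appearing in the hypothesis of Theorem~\ref{cycKumNU}.

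The key step is then to pass from the full Brauer-Manin obstruction to its $2$-primary part. Because $X_l$ is a Kummer variety, Remark~\ref{rem2} (which rests on \cite[Thm 1.7]{CreutzViray-DegBMO}) tells us that $X_l(\A_l)^{\Br X_l}\neq\emptyset$ if and only if $X_l(\A_l)^{\Br X_l\{2\}}\neq\emptyset$; consequently the assertion ``the Brauer-Manin obstruction is the only one for rational points on $X_l$'' is equivalent to ``the $2$-primary Brauer-Manin obstruction is the only one for rational points on $X_l$''. This verifies the hypothesis of Theorem~\ref{cycKumNU} with $d=2$, and the theorem then immediately yields that the $2$-primary Brauer-Manin obstruction to the Hasse principle is the only one for $0$-cycles of degree $\delta$ on $X$.

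I expect no genuine obstacle in this deduction: the whole proof is an assembly of previously established ingredients, namely Theorem~\ref{cycKumNU}, Conjecture~\ref{conjsko}, and the Creutz--Viray equivalence recorded in Remark~\ref{rem2}. The only point requiring care is the passage between the full and the $2$-primary formulations of sufficiency of the Brauer-Manin obstruction for rational points, which is exactly why one restricts to Kummer surfaces (rather than general K3 surfaces) and to odd $\delta$; both hypotheses are used precisely to make the $d=2$ specialisation of Theorem~\ref{cycKumNU} applicable.
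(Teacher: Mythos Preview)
Your proposal is correct and follows essentially the same approach as the paper: the paper's proof is simply the sentence ``By taking into account Remark~\ref{rem2} and Conjecture~\ref{conjsko}, and by running the proof for $d=2$, we obtain the following corollary to Theorem~\ref{cycKumNU}.'' You have merely spelled out in detail the assembly of those three ingredients, which is exactly what the paper intends.
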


\section{Transferring the emptiness of the Brauer-Manin set over field extensions
}\label{sec:transfer}
We establish the following result about transferring sufficiency of the Brauer-Manin obstruction over some field extensions (similar ideas have appeared in  \cite{riman}). 

\begin{thm}  Let $k$ be a number field and let $X$ be a Kummer variety over $k$. Suppose that $(\Br X/\Br_0 X)\{2\} \cong\Z/2\Z$ and $X(\A_k) \neq \emptyset$. Then there exist uncountably many finite extensions $l/k$  such that 
\[ X(\A_k)^{\Br X} = \emptyset \Longrightarrow  X(\A_l)^{\Br X_l} = \emptyset.\]
In particular, if the Brauer-Manin obstruction to the Hasse principle  is the only one for $k$-rational points, then it is also the only one  for $l$-rational points, for all field extensions $l/k$ as above. 
\end{thm}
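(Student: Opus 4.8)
The plan is to produce the uncountably many extensions $l/k$ by a careful degree-control argument, ensuring that passing from $k$ to $l$ neither enlarges the relevant Brauer quotient nor kills the obstruction already present over $k$. First I would observe that since $(\Br X/\Br_0 X)\{2\}\cong\Z/2\Z$, there is a single nontrivial $2$-primary Brauer class $\alpha$ (modulo $\Br_0 X$) realising the emptiness of $X(\A_k)^{\Br X}$; indeed, by \cite[Thm 1.7]{CreutzViray-DegBMO} (see Remark~\ref{rem2}), emptiness of $X(\A_k)^{\Br X}$ is equivalent to emptiness of $X(\A_k)^{\Br X\{2\}}$, so the obstruction is witnessed purely by the $2$-primary part, hence by $\alpha$. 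The goal is then to choose $l/k$ so that the restriction $(\Br X/\Br_0 X)\{2\}\xrightarrow{\sim}(\Br X_l/\Br_0 X_l)\{2\}$ is an isomorphism (so no new classes appear and $\alpha$ does not become constant) \emph{and} so that the local conditions cutting out the Brauer--Manin set persist after base change.

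The key step is the application of Theorem~\ref{thmisoNU} with $d=2$. I would fix a finite extension $F/k$ with $\Gamma_F$ acting trivially on $\Br\overline{X}[2]$, and a finite Galois extension $k'/k$ with $\Pic X_{k'}=\Pic\overline{X}$; set $L:=Fk'$. For any finite extension $l/k$ of odd degree that is linearly disjoint from $L/k$, Theorem~\ref{thmisoNU} gives an isomorphism
\[
\Res_{l/k}:(\Br X/\Br_0 X)\{2\}\xrightarrow{\sim}(\Br X_l/\Br_0 X_l)\{2\},
\]
so in particular $\alpha$ restricts to a nonzero class $\alpha_l$ generating $(\Br X_l/\Br_0 X_l)\{2\}\cong\Z/2\Z$. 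Since the Brauer--Manin pairing is compatible with restriction of both the adelic points and the Brauer classes, the vanishing $X(\A_k)^{\Br X}=\emptyset$ — equivalently $X(\A_k)^{\{\alpha\}}=\emptyset$ — should transfer to $X(\A_l)^{\{\alpha_l\}}=\emptyset$, giving $X(\A_l)^{\Br X_l}=\emptyset$, provided one checks that the obstruction does not dissolve locally. This local transfer is the main obstacle: one must ensure that for each place $v$ the local pairing $\inv_v\langle\cdot,\alpha\rangle$ does not become identically zero over $l$. The cleanest route is to arrange that $l/k$ is \emph{split} at the finitely many places $v$ where $\alpha$ takes more than one local value (so $l_w=k_v$ for some $w\mid v$, and the local obstruction is preserved verbatim), while remaining of odd degree and linearly disjoint from $L/k$.

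To obtain uncountably many such $l$, I would construct them as follows. Let $\Sigma$ be the finite set of places of $k$ where the local invariant of $\alpha$ is nonconstant on $X(k_v)$. By weak approximation / Krasner-type arguments, I can choose, for each odd prime $p$ not dividing $[L:k]$ and lying outside a controlled finite set, a degree-$p$ extension $l/k$ that is totally split at every $v\in\Sigma$ and linearly disjoint from $L/k$; varying $p$ over the infinitely many admissible primes, and for each $p$ varying the defining polynomial within an open $p$-adic and archimedean neighbourhood, yields uncountably many pairwise distinct such $l$. (Alternatively, one produces a single tower and extracts uncountably many subextensions, but the splitting-at-$\Sigma$ condition is most transparently imposed prime by prime.) For each such $l$ the two ingredients above combine to give $X(\A_k)^{\Br X}=\emptyset\Rightarrow X(\A_l)^{\Br X_l}=\emptyset$. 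The final sentence then follows formally: if the Brauer--Manin obstruction is the only one over $k$, then $X(\A_k)^{\Br X}=\emptyset$ already forces $X(k)=\emptyset$; and for any $l$ as above the implication shows $X(\A_l)^{\Br X_l}=\emptyset$ whenever the obstruction is present, so the Brauer--Manin obstruction remains the only obstruction to the Hasse principle over $l$. I expect the delicate point to be verifying that the splitting condition at $\Sigma$ is simultaneously compatible with odd degree and linear disjointness from $L/k$, which is a standard but nontrivial application of the approximation theorem for the defining data of $l$.
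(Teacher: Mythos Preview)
Your outline lands close to the paper's argument, but there are two substantive points where it diverges.

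First, the appeal to Theorem~\ref{thmisoNU} is superfluous, and the linear disjointness from $Fk'$ that it forces is an unnecessary constraint. The paper never controls $(\Br X_l/\Br_0 X_l)\{2\}$ at all: since $\alpha_l\in\Br X_l$ automatically, one has $X(\A_l)^{\Br X_l}\subset X(\A_l)^{\alpha_l}$, so it suffices to show $X(\A_l)^{\alpha_l}=\emptyset$. Whether new $2$-primary classes appear over $l$, or whether $\alpha_l$ happens to lie in $\Br_0 X_l$, is irrelevant to this inclusion. The only conditions the paper imposes on $l$ are: odd degree, and completely split at a finite set $S$ where $\alpha$ extends to an integral model (not merely at the places where the local invariant is nonconstant --- constancy of $\inv_v(\alpha(\cdot))$ on $X(k_v)$ says nothing about $\inv_w(\alpha_l(\cdot))$ on the larger set $X(l_w)$ when $l_w\neq k_v$, so your set $\Sigma$ is a priori too small).

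Second, the sentence ``the local obstruction is preserved verbatim'' does not capture the actual mechanism. The obstruction is global: one must show that for every $(y_w)\in X(\A_l)$ the sum $\sum_w\inv_w(\alpha_l(y_w))$ is nonzero. The paper makes this explicit by arguing the contrapositive: given $(y_w)\in X(\A_l)^{\alpha_l}$, it uses complete splitting at $S$ to identify each $y_w$ (for $w\mid v\in S$) with a point of $X(k_v)$, and then exploits the parity constraint (odd degree, invariants in $\tfrac{1}{2}\Z/\Z$) to assemble an explicit $(x'_v)\in X(\A_k)^{\alpha}$, pairing up the places where the partial sums are $\tfrac{1}{2}$. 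Your forward-direction heuristic can be made to work via essentially the same parity bookkeeping, but as written it stops short of this computation.
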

\begin{proof} Let $\alpha \in \Br X \{2\}$ be such that its image in $\Br X/ \Br_0 X$ generates $(\Br X/ \Br_0 X)\{2\}$. Let $\calX$ be an integral model for $X$ over $\Spec \calO_{S}$, where $S \subset \Omega_k$ is some finite subset containing the archimedean places and such that $\alpha$ comes from an element in $\Br\calX\{2\}$. 
Since $\Br \calO_v = 0$ for all $v \not \in S$, we have $\langle x_v, \alpha \rangle_{\textrm{BM}} = 0$ for any $x_v \in X(k_v)$ and $v \not\in S$. 

Let $l/k$ be a finite extension of odd degree which is completely split at all places in $S$. Suppose that  $ X(\A_l)^{\Br X_l} \neq \emptyset$. 
It follows that $ X(\A_l)^{\alpha_l} \neq \emptyset$, where $\alpha_l:=\Res_{l/k}\alpha$.
Let $(x_w) \in X(\A_l)^{\alpha_l}$. Since $\inv_w \alpha_l(x_w) = 0$ for any place $w$ of $l$ lying above a place $v$ of $k$ with $v\notin S$, we have 
\[0= \sum_{w \in \Omega_l } \inv_w(\alpha_l(x_w)) =  \sum_{v \in S} \sum_{w|v} \inv_w(\alpha_l(x_w)).\]
Since $l/k$ is completely split at all places in $S$, for each place $w|v$ with $v \in S$ we have \mbox{$[l_w:k_v]=1$} and hence there exists some $u^{(w)}_v \in X(k_v)$ with $\inv_w(\alpha_l(x_w)) = \inv_v(\alpha(u_v^{(w)}))$.
It follows that
\begin{equation}\label{Av} \sum_{v \in S} \sum_{w|v} \inv_w(\alpha_l(x_w)) = \sum_{v \in S} \sum_{w|v}  \inv_v(\alpha(u^{(w)}_v)) = 0.\end{equation}
We now construct an adelic point $(x'_v) \in X(\A_k)^{\Br X\{2\}}$. By assumption, $X(\A_k) \neq \emptyset$, so let $(x''_v) \in X(\A_k)$. Let $x'_v := x''_v$ for all $v \not\in S$. For $v \in S$, we proceed as follows. Define
\[ A_v:=  \sum_{w|v}  \inv_v(\alpha(u^{(w)}_v)).\]
If $A_v = 0$, then there exists some $w | v$ such that $\inv_v \alpha(u_v^{(w)}) = 0$, since $\alpha$ has order $2$ in $\Br X/\Br_0 X$ and $\# \{ w| v\} = [l:k]$ is odd. 
In this case, let $x'_v := u_v^{(w)}$.
Note that, from \ref{Av},  there is an even number (possibly zero) of places $v \in S$ such that $A_v = 1/2$; let $\calV$ be the set of all such $v \in S$. If $\calV = \emptyset$, then $A_v = 0$ for all $v \in S$. This means that we have already constructed an adelic point $(x'_v) \in X(\A_k)$ with the property that $(x'_v) \in X(\A_k)^\alpha = X(\A_k)^{\Br X \{2\}}$.
So let us suppose that $\calV \neq \emptyset$ and pair the places in $\calV$, writing $\calV = \{v_1, v_1', v_2, v'_2, ..., v_r, v'_r\}$. For $i = 1, ..., r$, we do the following. Since $A_{v_i} = 1/2$ and  $A_{v'_i} = 1/2$, we can pick some  $u^w_{v_i} \in X(k_{v_i})$ and some  $u^{w'}_{v'_i} \in X(k_{v'_i})$ such that $\inv_{v_i} \alpha(u^w_{v_i} ) = \inv_{v'_i} \alpha(u^{w'}_{v'_i})  = 1/2$. We then set $x'_{v_i} := u^w_{v_i}$ and $x'_{v'_i}:= u^{w'}_{v'_i}$. At the end of this process, we obtain  an adelic point $(x'_v) \in X(\A_k)$ which, by construction, has the property that $(x'_v) \in X(\A_k)^\alpha = X(\A_k)^{\Br X \{2\}}$. 

Hence, for any finite extension $l/k$ of odd degree which is completely split at all places in $S$,  we have shown that $ X(\A_l)^{\Br X_l} \neq \emptyset$ implies that $ X(\A_k)^{\Br X\{2\}} \neq \emptyset$. By \cite[Thm 1.7]{CreutzViray-DegBMO}, the latter implies that $ X(\A_k)^{\Br X} \neq \emptyset$, as required. The fact that there exist uncountably many such field extensions $l/k$ follows from e.g. the proof of \cite[Thm 4.1]{MazurRubin-DioStab}. The very last statement of the theorem is clear.
\end{proof}

\renewcommand{\bibname}{References}

\bibliographystyle{alpha}
\bibliography{bibshort}

\end{document}